\begin{document}

\newtheorem{theorem}{\indent Theorem}[section]
\newtheorem{proposition}[theorem]{\indent Proposition}
\newtheorem{definition}[theorem]{\indent Definition}
\newtheorem{lemma}[theorem]{\indent Lemma}
\newtheorem{remark}[theorem]{\indent Remark}
\newtheorem{corollary}[theorem]{\indent Corollary}

\begin{center}
    {\large \bf Pullback Attractors for a Critical Degenerate Wave Equation\\ with Time-dependent Damping}
\vspace{0.5cm}\\{Dandan Li$^1$, Qingquan Chang$^2$, Chunyou Sun$^{2*}$}\\
{\small 1.School of Mathematics and Statistics, Huazhong University of Science\\ and Technology,
 Wuhan, 430074, PR China}\\
{\small 2.School of Mathematics and Statistics, Lanzhou University\\
Lanzhou, 730000, PR China }\\

\end{center}

\renewcommand{\theequation}{\arabic{section}.\arabic{equation}}
\numberwithin{equation}{section}

\begin{abstract} The aim of this paper is to analyze the long-time dynamical behavior of the solution for a degenerate wave equation with time-dependent damping term $\partial_{tt}u + \beta(t)\partial_tu = \mathcal{L}u(x,t) + f(u)$ on a bounded domain $\Omega\subset\mathbb{R}^N$ with Dirichlet boundary conditions. Under some restrictions on $\beta(t)$ and critical growth restrictions on the nonlinear term $f$, we will prove the local and global well-posedness of the solution and derive the existence of a pullback attractor for the process associated with the degenerate damped hyperbolic problem.

\noindent

{\bf Keywords} Semilinear degenerate damped hyperbolic equation; Time-dependent damping term; Critical growth restriction; Pullback attractor.
\end{abstract}
\footnote[0]{\hspace*{-7.4mm}
$^*$Corresponding author.\\
AMS Subject Classification: 35L70, 35L90, 37L30  \\
Emails: lidd2008@lzu.edu.cn(D.Li); sunchy@lzu.edu.cn(C.Sun); changqq12@lzu.edu.cn(Q.Chang).\\
}
\vspace{-1cm}

\section{Introduction}
\noindent

In this paper, we consider the following damped degenerate wave equation
\begin{equation} \label{mp00}
\begin{cases}
\partial_{tt}u(x,t) + \beta(t) \partial_tu(x,t) = \mathcal{L}u(x,t) + f(u(x,t)) &\text{in }\Omega,~t>0, \\
u(x,t) = 0 & \text{in }\partial\Omega,~t\geq0, \\
u(x,0) = u_0(x),~\partial_tu(x,0)=u_1(x) & \text{in }\Omega ,
\end{cases}
\end{equation}
where $\Omega\subset\mathbb{R}^N$ is a bounded smooth domain, $(u_0,u_1)$ is the initial data and the nonlinear term $f$  is a locally Lipschitz continuous function with critical growth condition and dissipativity condition.

The degenerate elliptic operator $\mathcal{L}$ is  defined as the following
\begin{equation*}
  \mathcal{L}u := \sum_{i,j=1}^N\partial_{x_i}(a_{ij}\partial_{x_j}u)
\end{equation*}
and the functions $\{a_{ij}\}$ are measurable in $\mathbb{R}^N$ and $a_{ij}=a_{ji}$, that is X-elliptic with respect to the family of vector fields $X=\{X_1,\cdots,X_m\}$. G. Folland constructed a general theory of ``subelliptic" regularity on a class of Lie groups which is sufficiently broad to admit a wide variety of applications to more general problems, namely the class of ``stratified groups" in \cite{Folland1975}, in which the subelliptic operator is proved playing the same role in Lie group as the standard Laplace operator in Euclidean space. In 1996, N. Garofalo and D. Neieu figured out isoperimetric and Sobolev inequalities of general families of vector fields  that include the H$\ddot{o}$rmander type as a special case in \cite{Garofalo1996}. Based on these papers, the notion of X-elliptic operator, which will be recalled in the Subsection 2.1, was first introduced in 2000 in the paper \cite{fin}.  In paper \cite{sz1} the authors defined a metric associated to the X-elliptic operators that plays the same role as the Euclidian metric for the standard Laplacian.  Using this metric they obtained Sobolev type embedding
theorems for such operators in \cite{BEE} and \cite{BE}, and the Sobolev embeddings allow us to solve the semilinear problem locally under sub-critical growth restrictions on the non-linearity. Meanwhile, the growth restrictions are determined by the homogenous dimension $Q$ with respect to the metric associated to the X-elliptic operators.
More recently, X-elliptic operators were studied intensely in \cite{CE,HAr,LT} (see more references therein), where a maximum principle, a non-homogenous Harnack inequality, Hardy type inequalities and Liouville theorem were obtained.

The wave equation equation formulated as
\begin{equation*}
\partial_{tt}u(x,t) + \beta  \partial_tu(x,t) = \triangle u(x,t) + f(u(x,t)) \qquad\text{in }\Omega,~t>0,
\end{equation*}
is an important second-order
nonlinear partial differential equation for the description of waves as
they occur in classical physics such as sound waves, light waves and water waves. It arises in fields like acoustics, electromagnetics, and fluid dynamics (see in \cite{MPM}). The wave equation alone does not specify a physical phenomenon, we usually set further conditions to describe some specific phenomenon, such as initial condition, boundary conditions and so on. Besides the general conditions for the wave equation, one can set additional conditions for the damping coefficient due to different viscous medium where the motion taken place. In \cite{POD}, the authors assume the damping coefficient asymptotically periodic depends on the Euclidean space and they  proved local and global energy decay for the wave equation. And in \cite{bto} and \cite{betate}, the coefficient $\beta(t):\mathbb{R}\to\mathbb{R}$ is considered bounded from below and above by some positive real numbers $m$ and $M$ respectively. They have shown that this equation has a pullback attractor and the pullback attractor is gradient-like, i.e. given as the union of the unstable manifolds of a finite set of hyperbolic global trajectories.

In this paper, we investigate the case that the damping coefficient $\beta(t):~\mathbb{R}\to\mathbb{R}$  is a continuous function belongs to $L_{loc}^{\infty}(\mathbb{R})$ and for some constant $C_{\beta}\geq1$ satisfies the following assumptions:
\begin{equation}\label{bt0}
    1\leq\int_t^{t+1}\beta(s)\mathrm{d}s = \int_t^{t+1}\alpha(s)\mathrm{d}s - \int_t^{t+1}\gamma(s)\mathrm{d}s \leq C_{\beta}~\text{for any}~t\in \mathbb{R}
\end{equation}
and
\begin{equation}\label{bt1}
    \int_{-\infty}^{+\infty}\gamma(s)\mathrm{d}s< \infty
\end{equation}
where functions $\alpha(t)$ and $\gamma(t)$ are the positive and negative parts of the function $\beta(t)$ respectively,  i.e.
$$\alpha(t):=\max\{\beta(t),0\}\qquad \text{and}\qquad \gamma(t):=\max\{-\beta(t),0\}.$$
Note that the constant $1$ in the left side of \eqref{bt0} can be replaced with any positive constant $C_{\beta_0}$ and as long as $C_{\beta}\geq C_{\beta_0}$ the demonstration in this paper is still valid.

One can easily find examples for $\beta(t)$ satisfy the above assumptions
   \begin{enumerate}
     \item $\beta(t)\equiv c>0$ (e.g., the case considered in \cite{II1,cV,jk}) or $0<m\leqslant \beta(t)\leqslant M$ (e.g., the case considered in \cite{bto,betate}) clearly satisfy the assumptions;
     \item $\beta(t)=\frac{3}{2}\sin(2\pi t)+\frac{|t|^2}{2|t|^2+1}+1.$
   \end{enumerate}

The main difficulty in this paper is that it is not assumed that $\beta(t)$ is a bounded continuous function and $\beta(t)$ has fixed signs. For instance, there is no big difference between the case in \cite{bto,betate}  and case that the coefficient $\beta(t)$ be considered as a positive constant technically, i.e. under suitable assumptions on $f$, the process associated with the problem \eqref{mp00} must be dissipative if the term $\beta(t)$ stays positive. However with the assumptions \eqref{bt0}-\eqref{bt1}, the term $\beta(t)$ may be negative for some subset of the real line. The negative part of the damping term is allowed to vanish but the positive part will not, so that the damping is effective at infinity.
 Our hypothesis for the coefficient $\beta(t)$, e.g. \eqref{bt1}, may not be the optimal one, but to our best knowledge the case is the first to be considered in this paper, especially by the so-called attractor. Furthermore, the problem \eqref{mp00} with assumptions \eqref{bt0}-\eqref{bt1} has some mathematical obstacles to overcome, for example, it is not known whether one can remove the technical condition \eqref{bt1} (we believe that \eqref{bt1} is unnecessary).



The aim of this paper is to establish the existence of a pullback attractor to the problem \eqref{mp00} under the assumptions that $f$ satisfies growth and dissipativity conditions and the time-dependent term $\beta(t)$ satisfies conditions \eqref{bt0}{\color{blue}-}\eqref{bt1}.
It is well-known that the long-time behavior of dynamical systems generated by evolution equations of mathematical physics can be described in terms of attractors of corresponding semigroups in many situations. Nonlinear wave phenomena occurs in various systems in physics, engineering, biology and geosciences\cite{jk,cV,jm,bto}. At the macroscopic level, wave phenomena may be modelled by hyperbolic wave-type partial differential equations.
As to the degenerate partial differential equations, plenty of papers are devoted to the study of long time behavior of solutions to degenerate parabolic and hyperbolic equations, such as the semilinear heat equation and the semilinear damped wave equation involving the degenerate operator X-elliptic instead of the classical Laplace operator (see \cite{AMX,AP,LS} and references therein),  where the existence of the global attractor and finite fractal dimension were obtained.


The non-autonomous systems are of great importance and interest as they appear in many applications to natural science. For a non-autonomous dynamical system such as \eqref{mp00}, the solution does not define a semigroup and, instead, it defines a two-parameter process {or} cocycle. Hence it is reasonable to describe asymptotic dynamics for the problem $\eqref{mp00}$ applying the theory of pullback attractors. We will recall the basic concepts for non-autonomous dynamical systems in Subsection 2.2. The strongly damped wave equation with time-dependent terms we considered in this paper was studied by several authors in \cite{bto} and \cite{betate} (see references therein). With the assumption that $\beta(t)$ is bounded by some positive numbers from above and below they achieved the existence, regularity, continuity, characterization and continuity of characterization of pullback attractors for the problem in these papers. Moreover, they proved that the evolution process associated with the strongly damped wave equation with time-dependent terms is a gradient-like evolution (homoclinic structures do not exist) in both \cite{bto} and \cite{betate}.

The content of the paper is as follows. In Section 2 we recall the concept of X-elliptic operator and some general results of pullback attractors. Then we obtain the local and global well-posedness of the Cauchy problem for \eqref{mp00} and the existence of a family of absorbing sets in Section 3. Finally the existence of the pullback attractor is proved in Section 4 by using of the results based on asymptotic compact processes.

%

\section{Preliminaries}
\noindent

In this section we recall the concept of X-elliptic operator and some general results of pullback attractors.

\subsection{Hypotheses and main results for $X$-elliptic operator}
\noindent

The partial differential operators we are dealing with are of the following type (see \cite{AMX})
\begin{equation}
\mathcal{L}u:=div(\mathbb{A}\nabla u ) =  \sum_{i,j=1}^N\partial_{x_i}(a_{ij}\partial_{x_j}u),
\end{equation}
where the matrix $\mathbb{A}$ is supposed to be symmetric and positive, the functions $a_{ij}$ are measurable in $\mathbb{R}^N$ and $a_{ij}=a_{ji}$. We assume that there exists a family $X:=\{X_1,\cdots,X_m\}$ of vector fields in $\mathbb{R}^N$, $X_j=(\alpha_{j1},\cdots,\alpha_{jN})$, $j=1,\cdots,m$, such that the functions $\alpha_{ij}$ are locally Lipschitz continuous in $\mathbb{R}^N$. As usual, we identify the vector-valued function $X_j$ with the linear first order partial differential operator
\begin{equation*}
  X_j = \sum_{k=1}^N\alpha_{jk}\partial_{x_k},\qquad j=1,\cdots,m.
\end{equation*}

We suppose, moreover, we assume the following hypotheses are satisfied
\begin{itemize}
  \item[(H1)] The control distance $d=d_X$ is well-defined, the metric spaces $(\mathbb{R}^N,d)$ is complete and the d-topology is the Euclidean one. Moreover there exists $A_0>1$ such that the following doubling condition holds
      \begin{equation}\label{pr1}
        0 < |B_{2r}| \leq A_0|B_r|,
      \end{equation}
      for every d-ball $B_r$ of radius $r$, $0<r<\infty$, hereafter $|E|$ will denote the Lebesgue measure of the set $E\subset\mathbb{R}^N$.
  \item[(H2)] There exist positive constants $C,\nu\geq1$ such that the following Poincar\'{e} inequality holds
      \begin{equation*}
        \fint_{B_r}|u-u_r|\mathrm{d}x \leq Cr\fint_{B_{\nu r}}|Xu|\mathrm{d}x \qquad \forall~u\in C^1(\bar{B_{\nu r}}),
      \end{equation*}
      where the mean value $u_r$ is $u_r = \fint_{B_r}u := \frac{1}{|B_r|}\int_{B_r}u$ and $Xu$ denotes the X-gradient of $u$, i.e.
      \begin{equation*}
        Xu = (X_1u,\cdots,X_mu).
      \end{equation*}
\end{itemize}

\begin{definition}The operator $\mathcal{L}$ is called uniformly X-elliptic if there exists a constant $C>0$ such that
\begin{equation*}
  \frac{1}{C}\sum_{j=1}^m\langle X_j(x),\xi\rangle^2 \leq \sum_{i,j=1}^Na_{ij}(x)\xi_i\xi_j \leq C\sum_{j=1}^m\langle X_j(x),\xi\rangle^2 \qquad x,\xi\in\mathbb{R}^N,
\end{equation*}
where $\langle\cdot,\cdot\rangle$ denotes the usual inner product in $\mathbb{R}^N$.
\end{definition}

We define the Hilbert space $H$ as the closure of $C_0^1(\Omega)$ with respect to the form
\begin{equation*}
  \|u\|_H := \big(\sum_{j=1}^m\|X_ju\|^2_{L^2(\Omega)}\big)^{\frac{1}{2}},\qquad u\in C_0^1(\Omega),
\end{equation*}
and assume the following Sobolev-type embedding property holds(see \cite{LS} for details):
\begin{equation}\label{qr1}
  H\hookrightarrow L^p(\Omega)\qquad \forall~p\in\Big[1,\frac{2Q}{Q-2}\Big],
\end{equation}
where $Q\leq N$ is the corresponding dimension for the operator $\mathcal{L}$ and the embedding is compact for every $p\in[1,\frac{2Q}{Q-2})$.

The operator $A:= -\mathcal{L}$ is positive, self-adjoint in $L^2(\Omega)$, and has compact inverse. Consequently, there exists an orthonormal basis of $L^2(\Omega)$ of eigenfunctions $\{\psi_j\}\in H,~j\in \mathbb{N}$ of $A$ with eigenvalues
\begin{equation*}
  0<\mu_1<\mu_2\leq\mu_3\leq \cdots, \quad\text{and}~\mu_j\to\infty ~\text{as}~j\to\infty.
\end{equation*}
Follows the theory that constructed the fractional power spaces associated with $A$ in \cite{AMX}, we denote for every $\alpha > 0$
\begin{equation*}
  X^{\alpha} = \left(\mathcal{D}(A^{\alpha}),\langle\cdot,\cdot\rangle_{X^{\alpha}}\right)=\left\{\psi = \sum_{j\in\mathbb{N}}c_j\psi_j,~c_j\in\mathbb{R}\big|\sum_{j\in\mathbb{N}}\mu_j^{2\alpha}c_j^2 < \infty\right\} ,
  \end{equation*}
where the inner product in $X^{\alpha}$ is given by $\langle u, v\rangle_{X^{\alpha}} = \langle A^{\alpha}u, A^{\alpha}v\rangle_{X^0}$ for any $u,v\in\mathcal{D}(A^{\alpha})$ and
\begin{equation*}
  A^{\alpha}\psi = A^{\alpha}\sum_{j\in\mathbb{N}}c_j\psi_j = \sum_{j\in\mathbb{N}}\mu_j^{\alpha}c_j\psi_j.
\end{equation*}

With this notation, we see that $X^{\frac{1}{2}} = H$, $X^0 = L^2(\Omega)$.
Using complex interpolation (as in Lions-Magenes) we conclude that for $0<\delta <1$ the embedding
\begin{equation}\label{q1}
  \left[X^0,X^{\frac{1}{2}}\right]_{\delta} = X^{\frac{\delta}{2}} \hookrightarrow \left[L^2(\Omega),L^q(\Omega)\right]_{\delta} = L^p(\Omega),
\end{equation}
is linear and bounded where $q=\frac{2Q}{Q-2}$ and $\frac{1}{p} = \frac{1}{2} - \frac{\delta}{Q}$.

Meanwhile by duality:
\begin{equation}\label{q2}
  L^2(\Omega)\subset L^{p^{\prime}}(\Omega) \subset X^{-\frac{\delta}{2}} \qquad \text{for} \quad \frac{1}{p} + \frac{1}{p^{\prime}} = 1\quad \text{and} \quad \frac{1}{p} = \frac{1}{2} - \frac{\delta}{Q},
\end{equation}
in particular we have
\begin{equation}\label{q3}
  X^{\frac{1-\delta}{2}}\subset L^p(\Omega) \qquad \frac{1}{p} = \frac{1}{2} + \frac{1-\delta}{Q}.
\end{equation}


We now state the local existence theorem for
\begin{equation*}
  \partial_tu = Au + f(u).
\end{equation*}
Assume that the operator $A$ generates an analytic semigroup in $X^0 := L^2(\Omega)$ and the nonlinear term $f: \mathbb{R} \times X^1 \to X^{\alpha}$ is such that, for each $r > 0$, there are constants $ C = C(r)$ such that, if $\|x\|_{X^1}$, $\|y\|_{X^1} \leq r$, then
\begin{equation}\label{con1}
  \|f(t,x)-f(t,y)\|_{X^{\alpha}} \leq C(r)h(t)\|x-y\|_{X^1},
\end{equation}
where $h(t)\equiv 1$ if $\alpha = 1$, and if $\alpha < 1$, then one can take $h(t) = \psi(t - t_0)|t - t_0|^{-\alpha}$ with $\psi: [0,\infty) \to [0,\infty)$ an increasing continuous function such that $\psi(0) = 0$.

\begin{theorem}[\cite{AIN}]\label{et}Suppose either that
\begin{itemize}
  \item[(i)] $A$ is sectorial and $\alpha\in (0,1]$, or
  \item[(ii)] -$A$ generates a strongly continuous semigroup and $\alpha = 1$.
\end{itemize}
If $f$ satisfies \eqref{con1}, then for any $r>0$, there exists a time $\tau_0 > 0$ such that, for any $x_0\in B_{X^1}(0,r)$, there is a unique mild solution $x(\cdot;t_0,x_0) : [t_0, t_0 + \tau_0] \to X^1$ of
\begin{equation*}
  \Dot{x} = -Ax + f(t,x) \qquad x(t_0) = x_0.
\end{equation*}
These solutions depends continuously on the initial data: if $x_0,~z_0\in B_{X^1}(0,r)$, then
\begin{equation*}
  \|x(t;t_0,x_0) - x(t;t_0,z_0)\|_{X^1} \leq C\|x_0 - z_0\|_{X^1} \qquad \text{for all} \qquad t\in[t_0, t_0 + \tau_0].
\end{equation*}
When $A$ is sectorial and $x_0,~z_0\in B_{X^1}(0,r)$, we have $x\in C((t_0, t_0 + \tau_0], X^{1+\theta})$ for all $0 \leq \theta < \alpha$,
\begin{equation*}
  (t-t_0)^{\theta} \|x(t;t_0,x_0)\|_{X^{1+\theta}} \to 0 \quad \text{as} \quad t \to t_0, \quad 0 < \theta < \alpha,
\end{equation*}
and for $0 \leq \theta \leq \theta_0 < \alpha$,
\begin{equation*}
  (t-t_0)^{\theta}\|x(t;t_0,x_0)-x(t;t_0,z_0)\|_{X^{1+\theta}} \leq C_{\theta_0}\|x_0 - z_0\|_{X^1} \quad \text{for all} \quad t\in[t_0, t_0 + \tau_0].
\end{equation*}
\end{theorem}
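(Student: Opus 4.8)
The plan is to recast the problem as a fixed point for the variation-of-constants (mild solution) map and to exploit the smoothing of the analytic semigroup generated by $-A$. Writing the mild formulation
$$ x(t) = e^{-A(t-t_0)}x_0 + \int_{t_0}^t e^{-A(t-s)} f(s, x(s))\, \mathrm{d}s, $$
I would define $\mathcal{T}$ as the right-hand side acting on the complete metric space of functions $x \in C([t_0, t_0+\tau], X^1)$ with $x(t_0) = x_0$, lying in a closed ball around the free evolution $t \mapsto e^{-A(t-t_0)}x_0$ and equipped with the sup-norm in $X^1$. The aim is to choose $\tau = \tau_0(r)$ so small that $\mathcal{T}$ maps this ball into itself and is a strict contraction; Banach's fixed point theorem then delivers the unique mild solution.

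The core ingredients are the standard sectorial bounds $\|A^\beta e^{-At}\| \le C_\beta\, t^{-\beta} e^{-\delta t}$ for $t>0$ and $\beta\ge 0$. Since $f$ takes values in $X^\alpha$, the $X^1$-norm of the integral term is controlled through $\|A^{1-\alpha} e^{-A(t-s)}\|\,\|f(s,\cdot)\|_{X^\alpha} \le C(t-s)^{-(1-\alpha)}\|f(s,\cdot)\|_{X^\alpha}$; combining this with the weighted Lipschitz hypothesis \eqref{con1}, where $h(s)=\psi(s-t_0)(s-t_0)^{-\alpha}$, produces for the difference of two candidates the integral
$$ C(r) \int_{t_0}^t (t-s)^{\alpha-1}\,\psi(s-t_0)\,(s-t_0)^{-\alpha}\, \mathrm{d}s \;\; \sup_{[t_0,t]}\|x-y\|_{X^1}. $$
Up to the factor $\psi$ this is a Beta-function integral $B(\alpha,1-\alpha)$, finite precisely because $0<\alpha\le 1$. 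The decisive point, and what I expect to be the main obstacle, is that this Beta integral does \emph{not} shrink with the interval length $\tau$, so the contraction constant cannot be made small merely by shortening the interval; the smallness must instead be extracted from $\psi(s-t_0)\le\psi(\tau)$, which tends to $0$ as $\tau\to 0$ since $\psi$ is continuous with $\psi(0)=0$. This is exactly why the singular weight is assumed with this specific structure. Self-mapping of the ball follows from the same computation together with continuity of the free evolution at $s=t_0$. In case (ii), or case (i) with $\alpha=1$, one has $h\equiv 1$ and no semigroup singularity, so the relevant integral is $O(\tau)$ and the contraction comes directly from the interval length, using only $\|e^{-At}\|\le M e^{\omega t}$ for the strongly continuous semigroup.

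Continuous dependence is then obtained by applying the identical kernel estimates to the difference $x(\cdot;t_0,x_0)-x(\cdot;t_0,z_0)$ and absorbing the contraction part, which yields $\|x(t)-z(t)\|_{X^1}\le C\|x_0-z_0\|_{X^1}$ after a singular Gronwall argument. For the additional smoothing in the sectorial case I would estimate in the stronger norm $X^{1+\theta}$: the linear term obeys $\|A^{1+\theta} e^{-A(t-t_0)}x_0\| = \|A^\theta e^{-A(t-t_0)} A x_0\| \le C(t-t_0)^{-\theta}\|x_0\|_{X^1}$, giving boundedness of $(t-t_0)^\theta\|x(t)\|_{X^{1+\theta}}$, while the nonlinear term requires the kernel $(t-s)^{-(1+\theta-\alpha)}$ to be integrable, that is $\theta<\alpha$, which is precisely the stated range. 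The limit $(t-t_0)^\theta\|x(t)\|_{X^{1+\theta}}\to 0$ then follows by a density argument: it is immediate for $x_0$ in the dense subspace $X^{1+\theta}$ and extends to general $x_0\in X^1$ by approximation. Finally, the $X^{1+\theta}$-Lipschitz dependence in the last display is obtained by combining these smoothing estimates with the $X^1$-continuous dependence already established.
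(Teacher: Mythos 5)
Theorem \ref{et} is stated in the paper as a quoted result from Carvalho--Langa--Robinson \cite{AIN} with no proof given, so the benchmark is the standard argument in that reference, and your proposal reproduces it faithfully: the Banach fixed point for the variation-of-constants map in $C([t_0,t_0+\tau],X^1)$, the smoothing bounds $\|A^{1-\alpha}e^{-At}\|\leq C t^{-(1-\alpha)}$, and, decisively, the correct observation that the singular convolution integral equals the Beta function $B(\alpha,1-\alpha)$ independently of $\tau$, so that the contraction constant must be extracted from $\psi(\tau)\to 0$ rather than from the interval length (with the case $\alpha=1$, respectively case (ii), handled by the elementary $O(\tau)$ estimate). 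Your treatment of the smoothing statements --- the kernel $(t-s)^{-(1+\theta-\alpha)}$ forcing $\theta<\alpha$, the uniform bound on $(t-t_0)^{\theta}\|x(t)\|_{X^{1+\theta}}$, and the density argument in $X^{1+\theta}$ for the vanishing limit --- likewise matches the reference's proof, so the proposal is correct and takes essentially the same route.
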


We also state a nonstandard extension of the classic Gronwall's inequality(see \cite{IC}).

\begin{lemma}[\cite{IC}]\label{G1}Assume that $f(t)$ is a continuous function on the interval $[a,b]$ such that
\begin{equation}\label{g3}
  \lim_{\delta\to 0}\inf_{\delta<0}\frac{1}{|\delta|}[f(t+\delta)-f(t)] \geq -m(t)
\end{equation}
for almost all $t\in(a,b)$, where $m(t)\in L^1(a,b)$. Then
\begin{equation}\label{g4}
  f(t_1)-f(t_2) \leq \int_{t_1}^{t_2}m(\tau)\mathbb{d}\tau \qquad \text{for all}\qquad a\leq t_1 < t_2 \leq b.
\end{equation}
\end{lemma}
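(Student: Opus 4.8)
The plan is to recast \eqref{g3} as a one-sided Dini-derivative bound and to prove \eqref{g4} by a continuity-and-contradiction argument applied to a suitably perturbed function. Writing $\underline{D}f(t)$ for the one-sided lower Dini derivative appearing in \eqref{g3} (so that the hypothesis controls the decrease of $f$ as $t$ increases, $\underline{D}f(t)\geq -m(t)$), I would fix $a\leq t_1<t_2\leq b$ and, for $\epsilon>0$, introduce the auxiliary function
\begin{equation*}
  \phi_\epsilon(t):=f(t)-f(t_1)+\int_{t_1}^{t}m(\tau)\,\mathrm{d}\tau+\epsilon(t-t_1),\qquad t\in[t_1,t_2].
\end{equation*}
Then $\phi_\epsilon$ is continuous, $\phi_\epsilon(t_1)=0$, and \eqref{g4} follows once we show $\phi_\epsilon(t_2)\geq0$ for every $\epsilon>0$ and let $\epsilon\to0^{+}$. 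Since $M(t):=\int_{t_1}^t m$ is absolutely continuous with $M'(t)=m(t)$ almost everywhere, subadditivity of the lower limit gives $\underline{D}\phi_\epsilon(t)\geq \underline{D}f(t)+m(t)+\epsilon\geq\epsilon>0$ at every point where both the hypothesis \eqref{g3} holds and $M$ is differentiable, i.e. for almost every $t$.

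The core of the argument is then to show that a continuous function whose lower Dini derivative is (essentially) positive cannot decrease. Suppose, for contradiction, that $\phi_\epsilon(t_2)<0$ for some fixed $\epsilon>0$, and set
\begin{equation*}
  c:=\inf\{t\in[t_1,t_2]:\phi_\epsilon(t)<0\}.
\end{equation*}
Because $\phi_\epsilon(t_1)=0$ and $\phi_\epsilon$ is continuous, we have $c>t_1$, $\phi_\epsilon(c)=0$, and there is a sequence $t_n\downarrow c$ with $\phi_\epsilon(t_n)<0$; hence $\underline{D}\phi_\epsilon(c)=\liminf_{h\to0^{+}}\phi_\epsilon(c+h)/h\leq0$, which contradicts $\underline{D}\phi_\epsilon(c)>0$. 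This forces $\phi_\epsilon(t_2)\geq0$ for every $\epsilon>0$, and letting $\epsilon\to0^{+}$ yields $f(t_1)-f(t_2)\leq\int_{t_1}^{t_2}m(\tau)\,\mathrm{d}\tau$.

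The point I expect to be the main obstacle is exactly the gap between ``almost every $t$'' and ``the single point $c$'': the contradiction above needs $\underline{D}\phi_\epsilon(c)>0$ at the \emph{particular} crossing point $c$, whereas \eqref{g3} (and the differentiability of $M$) is only guaranteed off a null set, which could contain $c$. Resolving this cleanly requires either invoking the classical monotonicity theorem, namely that a continuous function with nonnegative lower Dini derivative outside an at most countable set is nondecreasing, which upgrades the local contradiction to the whole interval, or, as in the intended applications where $f$ is the (locally absolutely continuous) energy functional, using $f(t_2)-f(t_1)=\int_{t_1}^{t_2}f'(\tau)\,\mathrm{d}\tau$ together with $f'\geq -m$ almost everywhere. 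A careful treatment of this null-set issue, rather than the algebra of $\phi_\epsilon$, is where the real work of the lemma lies.
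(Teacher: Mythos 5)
A preliminary remark on the comparison: the paper contains no proof of Lemma \ref{G1} at all --- it is quoted from \cite{IC} --- so the only internal check on your argument is how the lemma is applied in the proof of Lemma \ref{G2}, and that application requires the \emph{opposite} inequality to the one you prove. Since $\delta<0$ and $|\delta|=-\delta$, hypothesis \eqref{g3} is equivalent to $\limsup_{h\to 0^+}h^{-1}\left[f(t)-f(t-h)\right]\le m(t)$, i.e.\ an \emph{upper} bound on a left Dini derivative, whose integrated form is $f(t_2)-f(t_1)\le\int_{t_1}^{t_2}m(\tau)\,\mathrm{d}\tau$. The printed \eqref{g4} has $t_1$ and $t_2$ interchanged by a typo: $f(t)=-t$ with $m\equiv 0$ satisfies \eqref{g3} (the quotient equals $1$) but violates \eqref{g4} as written, and it is the corrected direction that is actually invoked at the end of the proof of Lemma \ref{G2} to obtain the upper bound \eqref{g2} for $h(t)=f(t)\exp\{-\int_s^t\psi(\sigma)\mathrm{d}\sigma\}$. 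Your proposal instead reads \eqref{g3} as a lower bound $\underline{D}f\ge -m$ on the \emph{forward} lower Dini derivative and then proves a lower bound on increments of $f$; that is the misprinted inequality, it is not implied by the left-sided hypothesis \eqref{g3}, and it could not be used to derive \eqref{g2}.

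More seriously, the obstacle you yourself flag --- the gap between ``almost every $t$'' and the particular crossing point $c$ --- is not repairable in the way you suggest. The classical monotonicity theorem you invoke tolerates only a \emph{countable} exceptional set, whereas \eqref{g3} is assumed only almost everywhere, and a null set may be uncountable; in fact, under your reading the statement is simply false: take $f=-c$ with $c$ the Cantor function and $m\equiv 0$. Then $f$ is continuous and all its Dini derivatives vanish off the (null) Cantor set, so your hypothesis holds a.e., yet $f(t_1)-f(t_2)=c(t_2)-c(t_1)>0$ for suitable $t_1<t_2$ (symmetrically, $c$ itself refutes the index-corrected version under a purely a.e.\ hypothesis). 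Hence no refinement of the barrier argument can succeed from continuity plus an a.e.\ Dini bound alone; extra structure is mandatory. Two correct routes: either assume \eqref{g3} at every $t$ (or off a countable set) with $m$ finite there, and quote the Dini-derivative monotonicity theorem; or --- which is what the application in Lemma \ref{G2} actually supplies, since inequality \eqref{g1} holds for \emph{all} $t$ and $\delta$ and furnishes a uniform integrable minorant for the difference quotients --- argue by averaging: for $\delta<0$, $\frac{1}{|\delta|}\int_{t_1}^{t_2}\left[f(\tau+\delta)-f(\tau)\right]\mathrm{d}\tau\to f(t_1)-f(t_2)$ by continuity of $f$, while Fatou's lemma bounds the liminf from below by $-\int_{t_1}^{t_2}m(\tau)\,\mathrm{d}\tau$, giving exactly $f(t_2)-f(t_1)\le\int_{t_1}^{t_2}m(\tau)\,\mathrm{d}\tau$. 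Your fallback of assuming $f$ absolutely continuous also closes the gap, but it is an added hypothesis not present in the lemma and not available for $f(t)=h(t)\exp\{\int_s^t\psi(\sigma)\mathrm{d}\sigma\}$ with $h$ merely continuous.
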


\begin{lemma}\label{G2}Let $\psi(t)$ and $g(t)$ be given scalar functions from $L^1_{loc}(\mathbb{R})$. Assume that a continuous function $h(t)$ defined on $\mathbb{R}$ satisfies the inequality
\begin{equation}\label{g1}
  h(t) + \int_s^t\psi(\tau)h(\tau)\mathrm{d}\tau \leq h(s) + \int_s^tg(\tau)\mathrm{d}\tau
\end{equation}
for all $t\geq s$. Then
\begin{equation}\label{g2}
  h(t) \leq h(s)\exp\left\{-\int_s^t\psi(\sigma)\mathrm{d}\sigma\right\} + \int_s^tg(\tau)\exp\left\{-\int_\tau^t\psi(\sigma)\mathrm{d}\sigma\right\}\mathrm{d}\tau
\end{equation}
for all $t\geq s$.
\end{lemma}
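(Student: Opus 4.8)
The plan is to reduce the integral inequality \eqref{g1} to a pointwise statement and then decouple $h$ from the integral term through a standard integrating-factor computation. First I would set
\begin{equation*}
  \Psi(t) := \int_s^t \psi(\sigma)\,\mathrm{d}\sigma,
\end{equation*}
which is well defined and locally absolutely continuous since $\psi\in L^1_{loc}(\mathbb{R})$, and I would work with the strictly positive weight $e^{\Psi(t)}$. The point is that the target estimate \eqref{g2} is exactly what one obtains after multiplying the (differential form of the) hypothesis by $e^{\Psi(t)}$: the quantity $h'(t)+\psi(t)h(t)$ collapses into the perfect derivative $\tfrac{\mathrm{d}}{\mathrm{d}t}\!\left(e^{\Psi(t)}h(t)\right)$, so that integrating from $s$ to $t$ would give
\begin{equation*}
  e^{\Psi(t)}h(t) \le h(s) + \int_s^t e^{\Psi(\tau)}g(\tau)\,\mathrm{d}\tau,
\end{equation*}
and dividing through by $e^{\Psi(t)}>0$ yields precisely \eqref{g2}, upon writing $\Psi(t)-\Psi(\tau)=\int_\tau^t\psi(\sigma)\,\mathrm{d}\sigma$.

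To turn this heuristic into an argument valid for a merely continuous $h$, I would introduce the auxiliary function
\begin{equation*}
  \Phi(t) := e^{\Psi(t)}h(t) - \int_s^t e^{\Psi(\tau)}g(\tau)\,\mathrm{d}\tau,
\end{equation*}
which is continuous on $[s,\infty)$ with $\Phi(s)=h(s)$; the desired conclusion \eqref{g2} is then equivalent to the one-sided bound $\Phi(t)\le\Phi(s)$ for all $t\ge s$. The plan is to establish this bound by applying the non-standard Gronwall inequality of Lemma \ref{G1} to the function $-\Phi$: one shows, using \eqref{g1}, that the lower left Dini derivative of $-\Phi$ obeys an estimate of the type \eqref{g3} with a majorant $m\in L^1_{loc}$ (in the most favorable case $m\equiv 0$), and then reads off from \eqref{g4} that $\Phi(t)-\Phi(s)\le\int_s^t m(\tau)\,\mathrm{d}\tau$, which is nonpositive for the intended choice of $m$ and hence gives $\Phi(t)\le h(s)$.

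The principal difficulty is precisely the derivation of this Dini estimate. Because $h$ is assumed only continuous (not absolutely continuous) and $\psi,g$ lie only in $L^1_{loc}(\mathbb{R})$, the product $e^{\Psi}h$ need not be differentiable and \eqref{g1} cannot simply be differentiated in $t$; instead one must exploit the coupling of $h$ to its own weighted integral in \eqref{g1} to control $\Phi(t)-\Phi(t+\delta)$ as $\delta\to 0^-$ for almost every $t$. Making this one-sided derivative bound rigorous, and checking that the resulting majorant integrates to a nonpositive quantity, is the crux of the argument; once it is in place, Lemma \ref{G1} converts the almost-everywhere information into the pointwise comparison $\Phi(t)\le h(s)$, which is \eqref{g2}. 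I would therefore expect the sign bookkeeping in passing from the integral inequality to the admissible Dini estimate — rather than the integrating-factor computation itself — to be where the real work lies.
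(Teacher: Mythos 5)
Your strategy is in essence the paper's own: an integrating factor $e^{\Psi(t)}$ combined with the nonstandard Gronwall Lemma \ref{G1}, and your reduction of \eqref{g2} to the one-sided bound $\Phi(t)\le\Phi(s)=h(s)$ for $\Phi(t)=e^{\Psi(t)}h(t)-\int_s^t e^{\Psi(\tau)}g(\tau)\,\mathrm{d}\tau$ is a correct repackaging of the paper's choice $f(t)=h(t)e^{\Psi(t)}$ with majorant $m(t)=g(t)e^{\Psi(t)}$. The problem is that your proposal stops exactly where the paper's proof begins: the Dini-derivative estimate is only announced as ``the crux,'' not derived, and that derivation is the entire content of the paper's argument. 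It is, moreover, short and concrete: apply the hypothesis \eqref{g1} on the interval $[t+\delta,t]$ (i.e.\ with $s$ replaced by $t+\delta$, $\delta<0$) to get $h(t+\delta)-h(t)\ge\int_{t+\delta}^t\psi(\tau)h(\tau)\,\mathrm{d}\tau-\int_{t+\delta}^t g(\tau)\,\mathrm{d}\tau$, multiply by the positive weight $e^{\Psi(t+\delta)}$, and invoke the Lebesgue differentiation theorem for $\psi h,\,g\in L^1_{loc}$ together with the a.e.\ differentiability of the locally absolutely continuous factor $e^{\Psi}$. For a.e.\ $t$ this yields
\[
\liminf_{\delta\to0^-}\frac{1}{|\delta|}\bigl[\Phi(t+\delta)-\Phi(t)\bigr]\;\ge\;-\psi(t)h(t)e^{\Psi(t)}+e^{\Psi(t)}\bigl(\psi(t)h(t)-g(t)\bigr)+e^{\Psi(t)}g(t)\;=\;0,
\]
so the majorant is $m\equiv0$ exactly, and there is no residual ``sign bookkeeping'' to check.

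The one structural decision you did commit to --- applying Lemma \ref{G1} to $-\Phi$ --- is wrong-signed and would fail. The hypothesis \eqref{g1} with $\delta<0$ bounds $h(t+\delta)$ only from \emph{below} in terms of $h(t)$, so it can only produce a lower bound on the backward difference quotient of $\Phi$, i.e.\ condition \eqref{g3} for $+\Phi$; an estimate of type \eqref{g3} for $-\Phi$ would require an upper bound on $\limsup_{\delta\to0^-}|\delta|^{-1}[\Phi(t+\delta)-\Phi(t)]$, which the one-sided inequality \eqref{g1} does not supply. Your choice was evidently driven by the literal conclusion \eqref{g4}, but be aware that \eqref{g4} as printed is inconsistent with \eqref{g3} (test $f(t)=-t$, $m\equiv0$: the backward quotient equals $1\ge0$, yet $f(t_1)-f(t_2)=t_2-t_1>0$): with the left-hand quotient in \eqref{g3}, the conclusion must read $f(t_2)-f(t_1)\le\int_{t_1}^{t_2}m(\tau)\,\mathrm{d}\tau$, and it is this corrected form that the paper's proof of Lemma \ref{G2} tacitly invokes. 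Applied to $\Phi$ with $m\equiv0$ it gives $\Phi(t)\le\Phi(s)=h(s)$, which is \eqref{g2}; applied to $-\Phi$, nothing usable comes out.
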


\begin{proof}The original Lemma in \cite{IC} with condition $\psi(t)$ and $g(t)$ be given scalar functions from $L^1_{loc}(\mathbb{R^+})$. The idea of the proof presented here is borrowed from \cite{IC}.
We apply Lemma \ref{G1} to the function
\begin{equation*}
  f(t) = h(t)exp\left\{\int_s^t\psi(\sigma)\mathrm{d}\sigma\right\}.
\end{equation*}
It follows from \eqref{g1} with $s = t + \delta$, $\delta<0$, that
\begin{equation*}
  \begin{aligned}
     &f(t+\delta) - f(t) \\
     =& h(t+\delta)\exp\left\{\int_s^{t+\delta}\psi(\sigma)\mathrm{d}\sigma\right\} - h(t)\exp\left\{\int_s^t\psi(\sigma)\mathrm{d}\sigma\right\}\\
     =& [h(t+\delta)-h(t)]\exp\left\{\int_s^{t+\delta}\psi(\sigma)\mathrm{d}\sigma\right\} - h(t)\left[\exp\left\{\int_s^{t+\delta}\psi(\sigma)\mathrm{d}\sigma\right\}-\exp\left\{\int_s^t\psi(\sigma)\mathrm{d}\sigma\right\}\right] \\
     \geq& \left[\int_{t+\delta}^t\psi(\tau)h(\tau)\mathrm{d}\tau - \int_{t+\delta}^tg(\tau)\mathrm{d}\tau\right]\exp\left\{\int_s^{t+\delta}\psi(\sigma)\mathrm{d}\sigma\right\}\\
      &+ h(t)\left[\exp\left\{\int_s^{t+\delta}\psi(\sigma)\mathrm{d}\sigma\right\}-\exp\left\{\int_s^t\psi(\sigma)\mathrm{d}\sigma\right\}\right].
  \end{aligned}
\end{equation*}
Take the lower limit in both sides of the inequality above as $|\delta|\to 0$, we can obtain \eqref{g3} with $m(t) = g(t)\exp\{\int_s^t\psi(\sigma)\mathrm{d}\sigma\}$. Therefor the application of Lemma \ref{G1} yields the inequality in \eqref{g2}.
\end{proof}

\subsection{General theory of infinite dimensional dynamical system}
\noindent

We begin with precise definitions of the notions of a process and the existence theory for pullback attractors. The dynamics occurs in a phase space $X$, which represents all possible states of the underlying systems. In the general treatment here we will take the phase space to be a metric space $(X,d)$, although in practical applications the space $X$ will be specialise to Banach, Hilbert, or Euclidean spaces as required. We will denotes by $\mathcal{C}(X)$ the set of all continuous transformations from $X$ into itself. For further details we refer to \cite{AF} and \cite{IC}.

A process in $X$ is a family of maps $\{S(t,s):t\geq s\}$ in $\mathcal{C}(X)$ such that properties
\begin{itemize}
  \item [(1)]$S(t,t) = I$, for all $t\in\mathbb{R}$,
  \item [(2)]$S(t,s) = S(t,\tau)S(\tau,s)$, for all $t\geq\tau\geq s$,
  \item [(3)]$(t,s,x)\to S(t,s)x$ is continuous, $t\geq s$, $x\in X$.
\end{itemize}

Throughout this section, $S(\cdot,\cdot)$ is a process on a metric space $(X,d)$.

\begin{definition}[\cite{AIN}](\textbf{Strongly bounded dissipative})\label{sbd} We say that a process $S(\cdot,\cdot)$ is strongly bounded dissipative if for each $t\in\mathbb{R}$ there is a bounded subset $B(t)$ of $X$ that pullback attracts bounded subsets of $X$ at time $\tau$; that is, given a bounded subset $D$ of $X$ and $\tau\leq t$, $\lim_{s\to-\infty}dist(S(\tau,s)D,B(t))=0$.
\end{definition}

\begin{definition}[\cite{AIN}](\textbf{Pullback asymptotically compact}) A process $S(\cdot,\cdot)$ in a metric space $X$ is said to be pullback asymptotically compact if, for each $t\in\mathbb{R}$, each sequence $\{s_k\}\leq t$ with $s_k\to-\infty$ as $k\to\infty$, and each bounded sequence $\{x_k\}\in X$ the sequence $\{S(t,s_k)x_k\}$ has a convergent subsequence.
\end{definition}

\begin{definition} The pullback $\omega$-limit set at time $t$ of a subset $B$ of $X$ is defined by
\begin{equation*}
  \omega(B,t):=\bigcap_{\sigma\leq t}\overline{\bigcup_{s\leq\sigma}S(t,s)B},
\end{equation*}
or, equivalently,
\begin{eqnarray*}
  \omega(B,t)= \big\{y\in X: \text{there are sequences}~\{s_k\}\leq t,~s_k\to-\infty~\text{as}~k\to\infty, \\
  \text{and}~\{x_k\}\in B,~\text{such that}~y=\lim_{k\to\infty}S(t,s_k)x_k\big\}.
\end{eqnarray*}
\end{definition}

The following theorem gives a sufficient condition for the existence of a compact pullback attractor $\mathcal{A}(\cdot)$ that is bounded in the past, i.e.
\begin{equation*}
  \bigcup_{s\leq t}\mathcal{A}(s)
\end{equation*}
is bounded for each $t\in\mathbb{R}$.

\begin{theorem}[\cite{AIN}]\label{eth}If a process $S(\cdot,\cdot)$ is strongly pullback bounded dissipative and pullback asymptotically compact and $B(\cdot)$ is a family of bounded subsets of $X$ such that, for each $t\in\mathbb{R}$, $B(\cdot)$ pullback attracts bounded subsets of $X$ at time $\tau$ for each $\tau\leq t$, then $S(\cdot,\cdot)$ has a compact pullback attractor $\mathcal{A}(\cdot)$ such that $\mathcal{A}(t)=\omega(\bar{B}(t),t)$ and $\bigcup_{s\leq t}\mathcal{A}(s)$ is bounded for each $t\in\mathbb{R}$.
\end{theorem}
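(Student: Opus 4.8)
The plan is to take the candidate family $\mathcal{A}(t):=\omega(\bar B(t),t)$ supplied by the statement and verify directly the three defining properties of a pullback attractor---compactness of each fiber, invariance under the process, and pullback attraction of bounded sets---together with the boundedness of $\bigcup_{s\le t}\mathcal{A}(s)$. Everything rests on two auxiliary facts about pullback $\omega$-limit sets that I would establish first. The first (fixed-set invariance) is that for any bounded $D$ and $t\ge s$ one has $S(t,s)\,\omega(D,s)=\omega(D,t)$. The inclusion ``$\subseteq$'' is immediate from the joint continuity property (3) and the cocycle identity (2): if $y=\lim_k S(s,\sigma_k)x_k\in\omega(D,s)$ then $S(t,s)y=\lim_k S(t,\sigma_k)x_k\in\omega(D,t)$. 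The reverse inclusion is where pullback asymptotic compactness enters: given $z=\lim_k S(t,\sigma_k)x_k\in\omega(D,t)$, I factor $S(t,\sigma_k)x_k=S(t,s)S(s,\sigma_k)x_k$, extract a subsequence with $S(s,\sigma_k)x_k\to w\in\omega(D,s)$, and conclude $z=S(t,s)w$ by continuity.

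The second building block (absorption) is that for every bounded $D$ one has $\omega(D,t)\subseteq\mathcal{A}(t)$, and here strong bounded dissipativity is used. Fix $y=\lim_k S(t,\sigma_k)x_k\in\omega(D,t)$ and any intermediate time $\sigma\le t$; factor $S(t,\sigma_k)x_k=S(t,\sigma)S(\sigma,\sigma_k)x_k$. Since $\mathrm{dist}(S(\sigma,\sigma_k)D,B(t))\to0$ while $\{S(\sigma,\sigma_k)x_k\}$ admits (after passing to a subsequence) a limit $w_\sigma$, that limit must lie in $\bar B(t)$, and $y=S(t,\sigma)w_\sigma\in S(t,\sigma)\bar B(t)$. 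As $\sigma\le t$ was arbitrary, this places $y$ in $\bigcap_{\sigma\le t}\overline{\bigcup_{s\le\sigma}S(t,s)\bar B(t)}=\omega(\bar B(t),t)=\mathcal{A}(t)$.

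With these in hand the remaining steps are short. For nonemptiness and compactness of $\mathcal{A}(t)$: closedness is automatic, being an intersection of closed sets, while relative compactness (hence nonemptiness) follows from a diagonal argument feeding any sequence drawn from $\mathcal{A}(t)$ into pullback asymptotic compactness applied to the bounded set $\bar B(t)$. For invariance of the family, I combine the two blocks: applying the first block with $D=\bar B(s)$ gives $S(t,s)\mathcal{A}(s)=\omega(\bar B(s),t)$, which lies in $\mathcal{A}(t)$ by the second block; conversely, writing $\mathcal{A}(t)=S(t,s)\,\omega(\bar B(t),s)$ (first block with $D=\bar B(t)$) and using $\omega(\bar B(t),s)\subseteq\mathcal{A}(s)$ (second block) yields the reverse inclusion, so $S(t,s)\mathcal{A}(s)=\mathcal{A}(t)$. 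Pullback attraction is then a contradiction argument: if $\mathcal{A}(t)$ failed to attract a bounded $D$, I could find $s_k\to-\infty$ and $x_k\in D$ with $S(t,s_k)x_k$ staying $\varepsilon$-away from $\mathcal{A}(t)$, yet pullback asymptotic compactness forces a subsequence to converge to a point of $\omega(D,t)\subseteq\mathcal{A}(t)$. Finally, for the boundedness of $\bigcup_{s\le t}\mathcal{A}(s)$ I invoke strong dissipativity once more: for $s\le t$ and $y\in\mathcal{A}(s)$ the approximating points lie in $S(s,\sigma_k)\bar B(s)$, which pullback converges into $B(t)$, forcing $y\in\bar B(t)$, whence $\bigcup_{s\le t}\mathcal{A}(s)\subseteq\bar B(t)$.

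I expect the invariance step to be the main obstacle, since it is the only place where both auxiliary facts must be combined and where the two inclusions are proved by genuinely different mechanisms---continuity and the cocycle law for one direction, asymptotic compactness for the other. Everything else reduces to careful bookkeeping with the $\omega$-limit definition and repeated extraction of convergent subsequences.
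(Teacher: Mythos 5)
Note first that the paper itself offers no proof of this statement---it is quoted verbatim from \cite{AIN}---so the only meaningful comparison is with the standard argument in that reference, which your proof reproduces: take $\mathcal{A}(t)=\omega(\bar B(t),t)$, establish the two $\omega$-limit lemmas (the identity $S(t,s)\,\omega(D,s)=\omega(D,t)$ via the cocycle law, continuity and pullback asymptotic compactness; the absorption $\omega(D,t)\subseteq\mathcal{A}(t)$ via strong bounded dissipativity), and then read off compactness by a diagonal extraction, invariance by combining the two lemmas, pullback attraction by contradiction, and $\bigcup_{s\le t}\mathcal{A}(s)\subseteq\bar B(t)$ exactly as you do. Your argument is correct as written; the only points left tacit are the harmless standing assumption that each $B(t)$ is nonempty (otherwise pullback attraction to $B(t)$ is vacuous and $\omega(\bar B(t),t)$ could be empty), and minimality of the family if one's definition of pullback attractor requires it, which follows in one line since any closed pullback-attracting family $C(\cdot)$ must satisfy $\omega(\bar B(t),t)\subseteq C(t)$.
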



\section{Global existence and dissipativity of solutions}
\noindent

We will consider the following degenerate damped hyperbolic initial value problem on a smooth bounded domain $\Omega\subset\mathbb{R}^N$:
\begin{equation} \label{mp}
\begin{cases}
\partial_{tt}u(x,t) + \beta(t) \partial_tu(x,t) = \mathcal{L}u(x,t) + f(u(x,t)) &x\in\Omega,~t>0, \\
u(x,t) = 0 & x\in\partial\Omega,~t\geq0, \\
u(x,0) = u_0(x),~\partial_tu(x,0)=u_1(x) & x\in\Omega ,
\end{cases}
\end{equation}
where the initial data $(u_0,u_1)\in V$, and $V := H\times L^2(\Omega)$ is the natural phase space for the problem equipped with the norm
\begin{equation*}
  \|w\|_V := \left(a(u,u) + \|v\|^2_{L^2(\Omega)}\right)^{\frac{1}{2}},\qquad w = (u,v)\in V,
\end{equation*}
where we define the bilinear form $a(\cdot,\cdot)$ as $a(u,v) = (-\mathcal{L}u,v)_{L^2(\Omega)}$.

%

Assume that the function $\beta(t)\in L_{loc}^{\infty}(\mathbb{R})$, $\beta(t):\mathbb{R}\to\mathbb{R}$ is a continuous function which satisfies conditions \eqref{bt0}-\eqref{bt1} we proposed in Sect.1 and the nonlinear term $f$ is locally Lipschitz continuous with critical growth condition
\begin{equation}\label{fcon}
  |f(u)-f(v)| \leq c|u-v|(1 + |u|^{\gamma} + |v|^{\gamma}), \quad u,v\in\mathbb{R},
\end{equation}
for some constant $c\geq0$, where we assume $0<\gamma \leq \frac{q-2}{2}$ and $q=\frac{2Q}{Q-2}$.

We also need the following sign condition to ensure the global existence of solutions and allow to characterize their dissipation,
\begin{equation}\label{scon}
  \limsup_{|u|\to\infty}\frac{f(u)}{u} < \mu_1, \qquad \forall~u\in\mathbb{R},
\end{equation}
where $\mu_1>0$ denotes the first eigenvalue of the operator $-\mathcal{L}$ on $\Omega$ with homogeneous Dirichlet boundary conditions. Then, by spliting the domain in $B_R\subset\mathbb{R}$ and its complementary space we can deduce that there exist constants $0\leq c_0 < \mu_1$ and $c_1\in\mathbb{R}$ such that
\begin{equation}\label{sconc}
  uf(u) \leq c_1|u| + c_0|u|^2~\text{for all}~ u\in\mathbb{R}.
\end{equation}
Moreover, according to the argument that deduced \eqref{sconc} above, the constant $c_0$ can be determined in $(0,~\frac{\mu_1}{2})$ if we choose $B_R$ properly.

%

\begin{definition}We call $u(t,s)$ a local weak solution of \eqref{mp} if for any $s\in\mathbb{R}$ there exists $T>0$ such that
\begin{eqnarray*}
  \begin{aligned}
     u\in C([s,T);X^{\frac{1}{2}})\cap C^1((s,T);L^2(\Omega)), \\
   u(s)=u_0,~\partial_tu(s)=u_1, \qquad\qquad
  \end{aligned}
\end{eqnarray*}
where $(u_0,~u_1)\in V$ and $u(t,s)$ satisfies the equation
\begin{equation*}
  \frac{d^2}{dt^2}\langle u(t), v\rangle_{L^2(\Omega)} + \beta(t)\frac{d}{dt}\langle u(t),v\rangle_{L^2(\Omega)} = a(u(t),v) + \langle f(u),v\rangle \quad \forall~v\in X^{\frac{1}{2}},~t\in(s,T).
\end{equation*}
\end{definition}
\subsection{Global well-posedness}
\indent

We formulate problem \eqref{mp} in the abstract form:
\begin{eqnarray}\label{rmp}
  \partial_tw^{T} &=& \hat{A}w^{T} + \hat{f}(w),\qquad w^{T} = \left(\begin{array}{c}
  u\\
  v
  \end{array}
  \right) \nonumber\\
  w_{t=0} &=& w_0 = (u_0,v_0)^T,
\end{eqnarray}
where $v:=\partial_tu$ and $w:=(u,v)^T$. Moreover,
$\hat{A}$ and $\hat{f}$ are defined as
\begin{equation*}
  \hat{A} :=\left( \begin{array}{cc}
               0 & Id \\
               -A & 0
             \end{array}\right),\qquad
  \hat{f}(w) := \left(\begin{array}{c}
                  0 \\
                 -\beta(t)v+ f(u)
                \end{array}\right),
\end{equation*}
where $A$ denotes the operator $-\mathcal{L}$ in $L^2(\Omega)$ with homogenous Dirichlet boundary conditions.

Due to Theorem \ref{et}, local well-posedness is a consequence of the following result, which shows that the nonlinearity term is locally Lipschitz from $V$ to $V$.

\begin{lemma}\label{let} Let the assumption \eqref{fcon} for nonlinearity term $f$ holds, then $\hat{f}:V\to V$ is locally Lipschitz continuous.
\end{lemma}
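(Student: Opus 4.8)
The plan is to verify the local Lipschitz estimate directly from the definition of $\hat f$ and the structure of the $V$-norm. Since the first component of $\hat f$ vanishes identically, for any $w_1=(u_1,v_1)$, $w_2=(u_2,v_2)\in V$ the difference $\hat f(w_1)-\hat f(w_2)$ has zero $H$-component, so that
\begin{equation*}
  \|\hat f(w_1)-\hat f(w_2)\|_V = \big\|-\beta(t)(v_1-v_2)+\big(f(u_1)-f(u_2)\big)\big\|_{L^2(\Omega)}.
\end{equation*}
By the triangle inequality this splits into a linear damping contribution and a nonlinear contribution. The damping term is immediate: since $\beta\in L^\infty_{loc}(\mathbb{R})$, on any bounded time interval $|\beta(t)|\le\|\beta\|_{L^\infty}$, and $\|v_1-v_2\|_{L^2(\Omega)}\le\|w_1-w_2\|_V$ directly from the definition of the $V$-norm. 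Hence the whole difficulty is concentrated in controlling $\|f(u_1)-f(u_2)\|_{L^2(\Omega)}$.

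First I would apply the critical growth condition \eqref{fcon} pointwise and square, obtaining
\begin{equation*}
  \|f(u_1)-f(u_2)\|_{L^2(\Omega)}^2 \le c^2\int_\Omega |u_1-u_2|^2\big(1+|u_1|^\gamma+|u_2|^\gamma\big)^2\,\mathrm{d}x.
\end{equation*}
After expanding $(1+|u_1|^\gamma+|u_2|^\gamma)^2\lesssim 1+|u_1|^{2\gamma}+|u_2|^{2\gamma}$, the key step is a H\"older estimate of each term $\int_\Omega|u_1-u_2|^2|u_i|^{2\gamma}\,\mathrm{d}x$. The natural choice is to pair $|u_1-u_2|^2$ with the exponent $q/2$ and $|u_i|^{2\gamma}$ with the conjugate exponent $s=q/(q-2)$, which reduces the integral to $\|u_1-u_2\|_{L^q}^2\,\|u_i\|_{L^{2\gamma s}}^{2\gamma}$.

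The crux is checking that both factors are dominated by the $H$-norm, and this is where the critical growth restriction is used, so it is the main (and only genuinely delicate) obstacle. By the Sobolev embedding \eqref{qr1}, $H\hookrightarrow L^q(\Omega)$, so $\|u_1-u_2\|_{L^q}\le C\|u_1-u_2\|_H$. For the second factor I must verify $2\gamma s\le q$: since $2\gamma s=\frac{2\gamma q}{q-2}$ and the hypothesis $\gamma\le\frac{q-2}{2}$ gives $2\gamma\le q-2$, we indeed get $2\gamma s\le q$; because $\Omega$ is bounded this yields $H\hookrightarrow L^{2\gamma s}(\Omega)$ as well, whence $\|u_i\|_{L^{2\gamma s}}\le C\|u_i\|_H$. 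Note the borderline case $\gamma=\frac{q-2}{2}$ forces us to land precisely at the endpoint exponent $q$, so there is no margin to spare. Collecting the estimates and taking square roots gives
\begin{equation*}
  \|f(u_1)-f(u_2)\|_{L^2(\Omega)}\le C\big(1+\|u_1\|_H^{\gamma}+\|u_2\|_H^{\gamma}\big)\|u_1-u_2\|_H,
\end{equation*}
and restricting to the ball $\|w_i\|_V\le r$ (hence $\|u_i\|_H\le r$) absorbs the growth factor into a constant $C(r)$. Combining this with the damping bound yields $\|\hat f(w_1)-\hat f(w_2)\|_V\le C(r)\|w_1-w_2\|_V$, which is exactly the claimed local Lipschitz continuity.
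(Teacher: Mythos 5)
Your proof is correct and takes essentially the same route as the paper: the same reduction of $\|\hat f(w_1)-\hat f(w_2)\|_V$ to the damping term plus $\|f(u_1)-f(u_2)\|_{L^2(\Omega)}$, and the same H\"older pairing, since your exponents $q/2$ and $q/(q-2)$ coincide exactly with the paper's $p=\frac{Q}{Q-2}$ and $p'=\frac{Q}{2}$ (as $q=\frac{2Q}{Q-2}$), followed by the same appeal to the critical embedding $H\hookrightarrow L^q(\Omega)$ from \eqref{qr1}. Your explicit verification of the endpoint case $\gamma=\frac{q-2}{2}$ and the remark that $\beta\in L^{\infty}_{loc}(\mathbb{R})$ only yields a locally-in-time uniform Lipschitz constant merely make precise what the paper leaves implicit.
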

\begin{proof}For any bounded domain $D\subset V$, for all $w = (u,v)^T,~\bar{w}=(\bar{u},\bar{v})^T\in D$, using the growth condition \eqref{fcon} and H$\ddot{o}$lder's inequality with $p=\frac{Q}{Q-2}$ and $p^{\prime}=\frac{Q}{2}$ we have:
\begin{eqnarray}\label{ly1}
  \|f(u)-f(\bar{u})\|^2_{L^2(\Omega)} &\leq& c\int_{\Omega}|u-\bar{u}|^2\big(1 + |u|^{2\gamma}+|\bar{u}|^{2\gamma}\big) \nonumber\\
   &\leq& c\|u-\bar{u}\|^2_{L^{2p}(\Omega)}\left(1 + \|u\|^{2\gamma}_{L^{2\gamma p^{\prime}}(\Omega)} + \|\bar{u}\|^{2\gamma}_{L^{2\gamma p^{\prime}}(\Omega)}\right)\nonumber \\
   &\leq& c\|u-\bar{u}\|^2_{L^{q}(\Omega)}\left(1 + \|u\|^{2\gamma}_{L^q(\Omega)} + \|\bar{u}\|^{2\gamma}_{L^q(\Omega)}\right)\nonumber \\
   &\leq& c\|u-\bar{u}\|^2_{X^{\frac{1}{2}}},
\end{eqnarray}
where we applied the embedding $X^{\frac{1}{2}}\hookrightarrow L^r(\Omega)$ with $0<r\leq q= \frac{2Q}{Q-2}$ in the last step, the constant $c$ depends on the domain $D$ and may vary in lines.

Let now prove the lemma by applying the equality \eqref{ly1}:
\begin{eqnarray*}
  \|\hat{f}(w)-\hat{f}(\bar{w})\|_V &=& \|-\beta(t)(v-\bar{v}) + f(u) - f(\bar{u})\|_{L^2(\Omega)} \\
       &\leq& c\big(\|v-\bar{v}\|_{L^2(\Omega)} + \|f(u) - f(\bar{u})\|_{L^2(\Omega)}\big) \\
   &\leq& c\big(\|v-\bar{v}\|_{L^2(\Omega)} + \|u-\bar{u}\|_{X^{\frac{1}{2}}}\big) \\
   &\leq& c \|w-\bar{w}\|_V,
\end{eqnarray*}
where the constant $c$ depends on the domain $D$ and may vary in lines.
\end{proof}

\begin{theorem}\label{etg}We assume the function $f$ satisfies the growth restriction \eqref{fcon} and $\beta(t)$ satisfies condition \eqref{bt0}-\eqref{bt1}. Then for every initial data $w_0=(u_0,v_0)^T\in V$ there exists a unique local weak solution of problem \eqref{mp} defined on the maximal interval of existence $[s,T)$ and
\begin{equation*}
  w\in C([s,T);V)\cap C^1((s,T);V).
\end{equation*}
The solution satisfies the variation of constant formula
\begin{equation*}
  w(t) = e^{\hat{A}(t-s)}w_0 + \int_s^te^{\hat{A}(t-\tau)}\hat{f}(w(s))\mathrm{d}\tau,
\end{equation*}
and either $T=\infty$ or, if $T<\infty$ then
\begin{equation*}
  \limsup_{t\to T} \|w(t)\|_V =\infty.
\end{equation*}
\end{theorem}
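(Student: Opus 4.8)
The plan is to read problem \eqref{mp} in its first-order form \eqref{rmp} and to apply the abstract local existence result, Theorem \ref{et}, in its hyperbolic guise: case (ii), with the role of the operator ``$-A$'' in that theorem played here by $\hat{A}$ and with $\alpha=1$. The first point to settle is that $\hat{A}$, with domain $D(\hat{A})=X^{1}\times X^{\frac12}$, generates a strongly continuous semigroup on $V=X^{\frac12}\times X^{0}$. This is the standard wave generator: since $A=-\mathcal{L}$ is positive and self-adjoint in $L^2(\Omega)$, the operator $\hat{A}$ is skew-adjoint with respect to the energy inner product inducing $\|\cdot\|_V$, so by Stone's theorem it generates a $C_0$-group of isometries on $V$. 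Hence hypothesis (ii) of Theorem \ref{et} holds with $\alpha=1$.

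Next I would verify the Lipschitz hypothesis \eqref{con1} for $\hat{f}$. Because $\alpha=1$ forces $h\equiv 1$, what is needed is precisely a locally Lipschitz bound $V\to V$ that is locally uniform in $t$. Lemma \ref{let} supplies the Lipschitz continuity of the $f(u)$ component, while the damping contributes the term $-\beta(t)v$; since $\beta$ is continuous and lies in $L^\infty_{loc}(\mathbb{R})$, it is bounded on every compact time interval, so on each finite horizon $[s,s+\tau]$ the map $w\mapsto \hat{f}(t,w)$ is Lipschitz on bounded sets of $V$ with a constant independent of $t$ (note that the sign of $\beta$ is irrelevant here, only its local boundedness). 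With \eqref{con1} in hand, Theorem \ref{et}(ii) yields, for every $r>0$, an existence time $\tau_0>0$ such that each $w_0\in B_V(0,r)$ produces a unique mild solution $w\in C([s,s+\tau_0];V)$ obeying the variation of constants formula $w(t)=e^{\hat{A}(t-s)}w_0+\int_s^t e^{\hat{A}(t-\tau)}\hat{f}(w(\tau))\,\mathrm{d}\tau$, together with the stated continuous dependence on the initial data.

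To pass from the local solution to the maximal interval $[s,T)$ and the blow-up alternative, I would run the usual continuation argument. Define $T$ as the supremum of times up to which the unique mild solution extends; uniqueness guarantees the extensions are consistent, so $w$ is well defined on $[s,T)$ and belongs to $C([s,T);V)$. Suppose $T<\infty$ but $\limsup_{t\to T}\|w(t)\|_V=:R<\infty$. Then $\|w(t)\|_V$ stays below some fixed $R'$ on $[s,T)$, and choosing $t_*<T$ with $T-t_*<\tau_0(R')$ (the existence time furnished by Theorem \ref{et} for data of norm at most $R'$) and restarting from $w(t_*)$ produces a solution on $[t_*,t_*+\tau_0(R')]$ that strictly extends $w$ past $T$, contradicting maximality. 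Hence either $T=\infty$ or $\limsup_{t\to T}\|w(t)\|_V=\infty$.

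I expect the genuinely delicate step to be the asserted time-regularity $w\in C^1((s,T);V)$. Unlike the parabolic/sectorial case, the wave generator $\hat{A}$ is not analytic and carries no smoothing, so $t\mapsto e^{\hat{A}(t-s)}w_0$ is differentiable into $V$ only when $w_0\in D(\hat{A})$; for general $V$-data the mild solution is a priori merely continuous into $V$. The natural route is therefore to establish the $C^1$ statement first for regular data $w_0\in D(\hat{A})$ by differentiating the variation of constants formula (using that $\tau\mapsto\hat{f}(w(\tau))$ is continuous in $V$ along the solution together with the group property of $e^{\hat{A}t}$), and then to read the derivative for general data in the weaker sense already encoded in the definition of weak solution, where $\partial_t u=v$ gives $u\in C^1$ into $L^2(\Omega)$ and the second derivative is interpreted against test functions in $X^{\frac12}$. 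Everything else is a routine assembly of Lemma \ref{let}, Theorem \ref{et}, and the standard continuation principle.
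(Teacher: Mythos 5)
Your proposal is correct and takes essentially the same route as the paper: the paper establishes Theorem \ref{etg} precisely by rewriting \eqref{mp} as the first-order system \eqref{rmp} and invoking the abstract result Theorem \ref{et} (case (ii), $\alpha=1$, with $\hat{A}$ in the role of $-A$) once Lemma \ref{let} shows $\hat{f}$ is locally Lipschitz from $V$ to $V$. The details you add --- skew-adjointness of $\hat{A}$ and Stone's theorem, the locally-uniform-in-$t$ Lipschitz constant coming from $\beta\in L^{\infty}_{loc}(\mathbb{R})$, the standard restart argument for the blow-up alternative, and the observation that $C^{1}((s,T);V)$ regularity is only classical for data in $D(\hat{A})$ and must otherwise be read weakly (a point the paper asserts without proof) --- are correct elaborations of steps the paper leaves implicit.
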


To show the global existence of solutions, multiplying by $\partial_tu$ in both sides of  the equation \eqref{mp} we obtain:
\begin{equation}\label{ut1}
  \frac{1}{2}\frac{d}{dt}\|\partial_tu\|^2_{L^2(\Omega)} + \beta(t)\|\partial_tu\|^2_{L^2(\Omega)} + \frac{1}{2}\frac{d}{dt}a(u,u) - \frac{1}{2}\frac{d}{dt}\int_{\Omega}F(u)\mathrm{d}x = 0,
\end{equation}
where $F(u)=\int_0^u f(s)\mathrm{d}s$ denotes the primitive of $f$. Since the Lemma \ref{let} is valid, Theorem \ref{et} implies that if $u$ is a mild solution of \eqref{mp} then $(u,\partial_tu)\in C([s,T];V)$.

Then integrating \eqref{ut1} over $[s,t]$ we have:
\begin{equation}\label{ut2}
  \begin{aligned}
     &\frac{1}{2}\|\partial_tu\|^2_{L^2(\Omega)} + \int_s^t \beta(\tau)\|\partial_tu\|^2_{L^2(\Omega)}\mathrm{d}\tau + \frac{1}{2}a(u,u) - \int_{\Omega}F(u)\mathrm{d}x \\
     = &\frac{1}{2}\|\partial_tu(s)\|^2_{L^2(\Omega)} + \frac{1}{2}a(u(s),u(s)) - \int_{\Omega}F(u_0)\mathrm{d}x
  \end{aligned}
\end{equation}
Using the growth restriction \eqref{fcon} we obtain
\begin{equation}\label{ut3}
  \int_{\Omega}|F(u_0)|\mathrm{d}x \leq C\|u_0\|^{\gamma+2}_{L^{\gamma+2}(\Omega)},
\end{equation}
where $\gamma+2 = \frac{q-2}{2} + 2 < q$, hence the embedding $L^{\gamma +2}(\Omega)\hookrightarrow X^{\frac{1}{2}}$ holds.

On the other hand, the sign condition \eqref{scon} and Young's inequality yield the estimate:
\begin{eqnarray}\label{ut4}
  \frac{1}{2}\|u(t)\|^2_{X^{\frac{1}{2}}} - \int_{\Omega}F(u)\mathrm{d}x &\geq& \frac{1}{2}\|u\|^2_{X^{\frac{1}{2}}} -\frac{1}{2}c_0\|u\|^2_{L^2(\Omega)} - \int_{\Omega}c_1|u|\mathrm{d}x \nonumber\\
   &\geq& \frac{1}{2}\|u\|^2_{X^{\frac{1}{2}}} - \frac{1}{2}(c_0+\epsilon)\|u\|^2_{L^2(\Omega)} - C_{\epsilon} \nonumber\\
   &\geq& \frac{1}{2} \|u\|^2_{X^{\frac{1}{2}}}\big(1-\frac{c_0+\epsilon}{\mu_1}\big) - C_{\epsilon} ,
\end{eqnarray}
for small $\epsilon>0$ and some constant $C_{\epsilon}\geq0$, where we used Poincar\'{e}'s inequality in the last step.

Set
\begin{equation}
M_0 = C\left(\frac{1}{2}\|\partial_tu(s)\|^2_{L^2(\Omega)} + \frac{1}{2}\|u(s)\|^2_{X^{\frac{1}{2}}} + \|u(s)\|^{\gamma+2}_{L^{\gamma+2}(\Omega)}\right) + C_{\epsilon},
 \end{equation}
and plug the inequality \eqref{ut4} into the inequality \eqref{ut2} we obtain:
\begin{equation}\label{ut5}
  \frac{1}{2}\|\partial_tu\|^2_{L^2(\Omega)} \leq M_0 + \int_s^t\gamma(\tau)\|\partial_tu\|^2_{L^2(\Omega)}\mathrm{d}\tau,
\end{equation}
the Gronwall Lemma (see in \cite{AIN}) and assumption in \eqref{bt1} imply that:
\begin{eqnarray}\label{utc}
  \|\partial_tu\|^2_{L^2(\Omega)} &\leq& 2M_0\exp\left\{2\int_s^t\gamma(\tau)\mathrm{d}\tau\right\}\nonumber\\
   &\leq&  2M_0 e^{2r_0},
\end{eqnarray}
where $r_0 := \int_0^t\gamma(s)\mathrm{d}s $.

Combine \eqref{ut4} and \eqref{utc},  we can also obtain the following inequalities:
\begin{eqnarray*}
  \frac{1}{2} \|u\|^2_{X^{\frac{1}{2}}}\left(1-\frac{c_0+\epsilon}{\mu_1}\right) &\leq& M_0 + \int_s^t\gamma(\tau)\|\partial_tu\|^2_{L^2(\Omega)}\mathrm{d}\tau \\
   &\leq& M_0 + 2r_0M_0e^{2r_0}.
\end{eqnarray*}

We can see from the above estimates that the solution $u$ is uniformly bounded in time $t\in[s,T)$ in $V= X^{\frac{1}{2}}\times L^2(\Omega)$, and therefore exists globally, i.e., $T=\infty$.


\begin{remark}\label{rmk1}From the discussions above we can see that the assumption \eqref{bt1} plays an important role to prove the global existence. In fact, the positive part $\alpha(t)$ of coefficient $\beta(t)$ of the damped term in problem \eqref{mp} prevents the energy from increasing, meanwhile the negative part $\gamma(t)$ encourages the energy to grow in some extent. One can assume that if the negative part $\gamma(t)$ decrease to $0$ as time $t$ goes to $+\infty$ then the energy can hold bounded globally. That's the reason that we propose the assumption $\int_0^{+\infty}\gamma(\tau)\mathrm{d}\tau < +\infty$ in this paper in the first place. Moreover, with the assumption \eqref{bt0} we have:
\begin{equation*}
  \int_t^{t+1}\alpha(\tau)\mathrm{d}\tau > \int_t^{t+1}\gamma(\tau)\mathrm{d}\tau + 1,
\end{equation*}
\begin{equation*}
  \int_s^T\alpha(\tau)\mathrm{d}\tau > \int_s^T\gamma(\tau)\mathrm{d}\tau + [T-s],
\end{equation*}
where $[T-s]\in [T-s,T-s+1]$ is a integer. Take the limit $T\to+\infty$ in the above inequality, one can easily deduce that $\int_s^{+\infty}\alpha(\tau)\mathrm{d}\tau = +\infty$.

\end{remark}

\subsection{Bounded dissipativity}
\indent

We verified in the previous subsection that problem \eqref{mp} generates a process $\{S(t,s):t\geq s\}$ in $V=X^{\frac{1}{2}}\times L^2(\Omega)$, for any initial data $w(s)=w_0=(u_0,v_0)^T\in V$:
\begin{equation*}
  S(t,s)w_0 = w(t;w_0), \qquad t\geq0,
\end{equation*}
where $w(t)=(u(t),\partial_tu(t))^T\in C([s,T);V)$ denotes the unique global weak solution of \eqref{mp} corresponding to the initial data $w_0=(u_0,v_0)^T\in V$.

In what follows we will prove that the process $\{S(t,s):t\geq s\}$ is strongly pullback bounded dissipative(Definition \ref{sbd}). We start with a more explicit estimate for $\partial_tu(t)$.

\begin{lemma}\label{utd} Let the assumptions for Theorem \ref{etg} hold and let $(u(t),\partial_tu(t))$ be the solution corresponding to \eqref{mp} with initial data in a bounded domain, then there exists $T_0>0$ such that the following estimate holds for all $t-s>T_0$:
\begin{equation}\label{dut}
  \|\partial_tu(t)\|^2_{L^2(\Omega)} \leq R_0,
\end{equation}
where $R_0$ is independent of the norm of the initial data.
\end{lemma}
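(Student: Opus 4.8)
The plan is to reduce the claim to a uniform-in-initial-data bound on the total energy
\[
E(t) = \tfrac12\|\partial_tu\|^2_{L^2(\Omega)} + \tfrac12 a(u,u) - \int_\Omega F(u)\,\mathrm dx ,
\]
and then to extract the decay of $E$ from an integral Gronwall argument driven by the divergence of $\int\alpha$ and the integrability of $\gamma$. First I would observe that the coercivity estimate already established in \eqref{ut4} gives $\tfrac12 a(u,u) - \int_\Omega F(u)\,\mathrm dx \ge -C_\epsilon$, whence $\tfrac12\|\partial_tu\|^2_{L^2(\Omega)} \le E(t) + C_\epsilon$. Therefore it suffices to produce a constant $R$, independent of $w_0$, with $E(t)\le R$ whenever $t-s>T_0$; then \eqref{dut} holds with $R_0 = 2(R+C_\epsilon)$.

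To control $E$ I would work with the perturbed functional $\Lambda(t) = E(t) + \epsilon\langle\partial_tu,u\rangle_{L^2(\Omega)}$ for a small parameter $\epsilon>0$. By Young's and Poincar\'e's inequalities together with \eqref{ut4}, $\Lambda$ is equivalent to $E$ up to an additive constant, say $\tfrac12 E(t)-C\le\Lambda(t)\le 2E(t)+C$. Differentiating and inserting the equation — the multiplier $\partial_tu$ giving the energy identity $\frac{d}{dt}E = -\beta(t)\|\partial_tu\|^2$, and the multiplier $u$ giving the equipartition relation $\frac{d}{dt}\langle\partial_tu,u\rangle = \|\partial_tu\|^2 - a(u,u) + \langle f(u),u\rangle - \beta(t)\langle\partial_tu,u\rangle$ — I would assemble a differential inequality
\[
\frac{d}{dt}\Lambda(t) + \psi(t)\Lambda(t) \le g(t),
\]
where the dissipation rate $\psi$ is built from the positive part $\alpha$ of the damping and $g$ collects the contributions of the negative part $\gamma$ together with the constants arising from the sign condition \eqref{scon}. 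Here \eqref{scon}--\eqref{sconc} is exactly what converts $\langle f(u),u\rangle - a(u,u)$ into a strictly negative multiple of $a(u,u)$ plus a constant, which is essential so that $\psi$ dominates the \emph{full} energy and not merely its kinetic part.

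Once this is put in integrated form, $\Lambda(t) + \int_s^t\psi\,\Lambda \le \Lambda(s) + \int_s^t g$, the conclusion follows from the Gronwall Lemma \ref{G2}: since $\int_s^t\alpha\to+\infty$ as $s\to-\infty$ (guaranteed by \eqref{bt0} and Remark \ref{rmk1}) we have $\Lambda(s)\exp\{-\int_s^t\psi\}\to 0$, while the integrability $\int_{-\infty}^{+\infty}\gamma<\infty$ from \eqref{bt1} keeps the forcing term $\int_s^t g(\tau)\exp\{-\int_\tau^t\psi\}\,\mathrm d\tau$ bounded by a constant independent of $s$. This yields $E(t)\le R$ for $t-s$ large and hence \eqref{dut}. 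The pullback structure — decay as $s\to-\infty$ rather than $t\to+\infty$ — is precisely the setting for which Lemma \ref{G2} is designed.

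I expect the main obstacle to be the control of the damping cross-terms, because $\beta(t)$ is neither of fixed sign nor uniformly bounded: in $\frac{d}{dt}\Lambda$ the products $-\beta\|\partial_tu\|^2$ and $-\epsilon\beta\langle\partial_tu,u\rangle$ produce, after Young's inequality, contributions proportional to $\alpha(t)\|u\|^2_{X^{1/2}}$ whose coefficient cannot be rendered negative by a single fixed small $\epsilon$ when $\alpha$ has large spikes. Overcoming this is exactly where \eqref{bt0} and \eqref{bt1} must be used with care — either by splitting $\beta=\alpha-\gamma$ and routing the $\gamma$-terms into the integrable forcing $g$ while keeping the $\alpha$-terms inside the dissipation $\psi$, or by passing to an interval-by-interval estimate $E(t+1)\le\rho E(t)+C$ with $\rho<1$ that exploits the lower bound $\int_t^{t+1}\beta\ge 1$ and then iterating, absorbing the summable $\gamma$-corrections into $C$. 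Tuning $\epsilon$, the Young parameters, and the weight $\psi$ so that the $\alpha$-dependent terms are absorbed while $g$ stays integrable against the exponential weight is the delicate technical heart of the argument.
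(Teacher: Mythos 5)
There is a genuine gap, and you named it yourself without closing it. Your route reduces the lemma to a uniform bound on the \emph{full} energy via the perturbed functional $\Lambda = E + \epsilon(u,\partial_tu)$, and the decisive difficulty is exactly the one you flag at the end: since $\beta$ is not uniformly bounded (only $\int_t^{t+1}\beta(s)\,\mathrm{d}s \leq C_\beta$ holds), the cross term $-\epsilon\beta(t)(u,\partial_tu)$ produces after Young's inequality a contribution of order $\epsilon\,\alpha(t)\|u\|^2_{X^{\frac{1}{2}}}$ that no fixed $\epsilon$ and no tuning of Young parameters can absorb into the fixed-rate potential dissipation $-\epsilon\, a(u,u)$ on the spikes of $\alpha$; equally, putting ``the $\alpha$-terms inside $\psi$'' fails because $\psi(t)\Lambda(t)$ with $\psi\sim\alpha$ would demand dissipation $\sim \alpha(t)a(u,u)$, which the equation does not provide. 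Your first proposed remedy (routing the $\gamma$-terms into $g$) does not touch this, since the obstruction comes from $\alpha$, not $\gamma$; the second (an iteration $E(t+1)\leq \rho E(t)+C$) is only sketched. In the paper this difficulty is real and is overcome only in the absorbing-set Lemma \ref{pa}, by splitting the time line into the set $A$ where $\beta \leq M_\alpha$ (using $\Phi_\delta$) and the set $B$ where $\beta > M_\alpha$ (using the unperturbed $\Phi_0$, for which no cross term arises) --- and, crucially, that proof \emph{uses} Lemma \ref{utd} as an input. So your reduction aims at a strictly stronger statement than the lemma, by an argument that in the paper's architecture sits downstream of it.

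The paper's actual proof is much lighter and never faces the cross-term obstruction, because only the kinetic part must decay. From the integrated identity \eqref{ut2}, the potential terms at time $t$ are bounded below via \eqref{ut4} and dropped, giving \eqref{ut55}; then Lemma \ref{G2} is applied with $h=\|\partial_tu\|^2_{L^2(\Omega)}$, $\psi=\beta$, and with the \emph{initial} potential energy treated as a forcing spread uniformly over the interval, $g\equiv\frac{1}{t-s}\bigl(a(u(s),u(s))-2\int_\Omega F(u_0)\,\mathrm{d}x\bigr)$. The lower bound $\int_s^t\beta \gtrsim t-s$ coming from \eqref{bt0} alone (condition \eqref{bt1} is not even needed for this lemma) then yields $\|\partial_tu(t)\|^2_{L^2(\Omega)} \leq C_0e^{-(t-s)} + \frac{C_0}{t-s}\bigl(1-e^{-(t-s)}\bigr)$, which tends to $0$ as $t-s\to\infty$ uniformly for data in a bounded set; choosing $T_0$ accordingly gives \eqref{dut} with $R_0$ independent of the data (only $T_0$ depends on the bounded set, which is all the lemma claims). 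In short: you assembled the right toolbox (energy identity, Lemma \ref{G2}, conditions \eqref{bt0}--\eqref{bt1}), but chose a route whose hardest step you leave unproven, whereas the kinetic-only estimate proves the lemma directly.
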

\begin{proof}Let $B\subset V$ be a bounded domain, $(u_0,v_0)=(u(s),v(s))\in B$. From the previous subsection we noticed that
\begin{equation}\label{ut55}
  \frac{1}{2}\|\partial_tu\|^2_{L^2(\Omega)}+ \int_s^t\beta(\tau)\|\partial_tu\|^2_{L^2(\Omega)}\mathrm{d}\tau \leq \frac{1}{2}\|\partial_tu(s)\|^2_{L^2(\Omega)} + \frac{1}{2}a(u(s),u(s)) - \int_{\Omega}F(u_0)\mathrm{d}x.
\end{equation}

Then take $g(\cdot):=\frac{1}{t-s}\big(a(u(s),u(s))-2\int_{\Omega}F(u_0)\mathrm{d}x\big)$ by applying Lemma \ref{G2} we have
\begin{equation}
   \begin{aligned}
      \|\partial_tu\|^2_{L^2(\Omega)} \leq& \|\partial_tu(s)\|^2_{L^2(\Omega)}\exp\{-\int_s^t\beta(\sigma)\mathrm{d}\sigma\}\\
       &+ \frac{1}{t-s}\Big(a(u(s),u(s))-2\int_{\Omega}F(u_0)\mathrm{d}x\Big)\int_s^t\exp\{{-\int_{\tau}^t\beta(\sigma)\mathrm{d}\sigma}\}\mathrm{d}\tau \\
     \leq& \|\partial_tu(s)\|^2_{L^2(\Omega)}e^{-(t-s)} + \frac{1}{t-s}\left(a(u(s),u(s))-2\int_{\Omega}F(u_0)\mathrm{d}x\right)\int_s^te^{-(t-\tau)}\mathrm{d}\tau \\
     \leq&  \|\partial_tu(s)\|^2_{L^2(\Omega)}e^{-(t-s)} + \frac{1}{t-s}\left(a(u(s),u(s))-2\int_{\Omega}F(u_0)\mathrm{d}x\right)(1-e^{-(t-s)})\\
     \leq&  C_0e^{-(t-s)} + \frac{C_0}{t-s}(1-e^{-(t-s)}),
   \end{aligned}
\end{equation}
where $C_0:=\max(\|\partial_tu(s)\|^2_{L^2(\Omega)},a(u(s),u(s))-2\int_{\Omega}F(u_0)\mathrm{d}x)$.

Note that
\begin{equation*}
  \lim_{t-s\to\infty}\big\{C_0e^{-(t-s)} + \frac{C_0}{t-s}(1-e^{-(t-s)})\big\}=0,
\end{equation*}
then for some $R_0>0$ small enough, there exists $T_0>0$ such that for all $t-s>T_0$ that \eqref{dut} holds.
\end{proof}

\begin{lemma}[\textbf{Pullback absorbing set}]\label{pa}Let $\Omega$ be  a bounded domain in $\mathbb{R}^N$ with smooth boundary. Then under the hypotheses \eqref{bt0}, \eqref{bt1}, \eqref{fcon} and \eqref{scon}, the process $\{S(t,s):t\geq s\}$ generated by problem \eqref{mp} is strongly bounded dissipative.
\end{lemma}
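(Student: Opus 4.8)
By Definition \ref{sbd} I must exhibit, for each $t\in\mathbb{R}$, a bounded set $B(t)\subset V$ that pullback attracts bounded sets. Since every estimate below will be uniform in the absolute time, the plan is to produce a single ball $B(t)=\{w:\|w\|_V^2\le R_1\}$ together with, for each bounded $D\subset V$ with $\|w_0\|_V\le\rho_D$, a threshold $T_\ast=T_\ast(\rho_D)$ so that $\|S(\tau,s)w_0\|_V^2\le R_1$ whenever $w_0\in D$ and $\tau-s\ge T_\ast$. Because $\tau\le t$ is fixed while $s\to-\infty$, the elapsed time $\tau-s\to+\infty$, so this gives $S(\tau,s)D\subset B(t)$ for all sufficiently negative $s$, and hence $\mathrm{dist}(S(\tau,s)D,B(t))\to0$. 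The kinetic part is already in hand: Lemma \ref{utd} furnishes a fixed $R_0$ and a time $T_0$ with $\|\partial_tu(\tau)\|_{L^2(\Omega)}^2\le R_0$ once $\tau-s>T_0$. Thus the whole task reduces to bounding $\|u(\tau)\|_{X^{1/2}}^2$ by a constant independent of $\|w_0\|_V$.

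To capture the position I would test \eqref{mp} with $u$ and work with the perturbed energy $\Phi=E+\epsilon\langle\partial_tu,u\rangle_{L^2(\Omega)}$, where $E=\tfrac12\|\partial_tu\|_{L^2(\Omega)}^2+\tfrac12\|u\|_{X^{1/2}}^2-\int_\Omega F(u)\,\mathrm{d}x$ and $\epsilon>0$ is small. By Cauchy--Schwarz and Poincaré the cross term is a lower-order perturbation of $E$, so together with \eqref{ut4} the functional $\Phi$ stays coercive, $\Phi(\tau)+C\ge k_1\|w(\tau)\|_V^2$; hence an upper bound on $\Phi(\tau)$ will suffice. Differentiating and substituting $\partial_{tt}u$ from \eqref{mp} gives
\[
\frac{d\Phi}{dt}=-\beta(t)\|\partial_tu\|_{L^2(\Omega)}^2+\epsilon\big(\|\partial_tu\|_{L^2(\Omega)}^2-\|u\|_{X^{1/2}}^2-\beta(t)\langle\partial_tu,u\rangle+\langle f(u),u\rangle\big).
\]
The dissipation for the position is the term $-\epsilon\|u\|_{X^{1/2}}^2$: the sign condition \eqref{sconc} (with $c_0<\mu_1/2$) and Poincaré convert $-\epsilon\|u\|_{X^{1/2}}^2+\epsilon\langle f(u),u\rangle$ into $-\epsilon\kappa\|u\|_{X^{1/2}}^2+\epsilon C$ for some $\kappa>0$.

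The main obstacle is the cross term $-\epsilon\beta(t)\langle\partial_tu,u\rangle$ (and the residual $-\beta\|\partial_tu\|^2$): because $\beta$ changes sign and, under \eqref{bt0}--\eqref{bt1} alone, need not be bounded below by a positive constant nor bounded above, one cannot absorb it into an instantaneous damping as in \cite{bto,betate}. The resolution I would pursue rests on the velocity decay already available. Since $\|\partial_tu\|_{L^2(\Omega)}^2\le R_0$ on the relevant range, both $\epsilon\|\partial_tu\|^2$ and $-\beta\|\partial_tu\|^2\le\gamma\|\partial_tu\|^2$ act only as forcing, and $|\langle\partial_tu,u\rangle|\le\sqrt{R_0}\,\mu_1^{-1/2}\|u\|_{X^{1/2}}$ lets me split $\beta=\alpha-\gamma$, send part of the cross contribution into $\tfrac{\epsilon\kappa}{2}\|u\|_{X^{1/2}}^2$ by Young's inequality, and retain the rest as a forcing that is integrable in time via \eqref{bt1} and the a priori (data-dependent) bound on $\|u\|_{X^{1/2}}$ from Section 3. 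The truly delicate point is the $\alpha$-weighted part of this term, which is not integrable pointwise: here I would keep the discarded dissipation $-\alpha\|\partial_tu\|^2$, complete the square in $\partial_tu+\tfrac\epsilon2u$, and carry the surviving $\alpha$-weighted contribution into the coefficient of $\Phi$ in a differential inequality rather than treating it as forcing; making this balance close is the crux of the argument.

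This should yield an inequality of the type $\Phi(\tau)+\int_s^\tau\psi\,\Phi\le\Phi(s)+\int_s^\tau g$ with $\psi\ge0$, $\int_s^\tau\psi\to+\infty$ as $\tau-s\to\infty$ (recall from Remark \ref{rmk1} that the effective damping satisfies $\int_s^{+\infty}\alpha=+\infty$), and $g$ controlled by $R_0$, by $\int\gamma<\infty$ and by the structural constants. Applying the generalized Gronwall Lemma \ref{G2}, the factor $e^{-\int_s^\tau\psi}$ annihilates the data-dependent term $\Phi(s)$ while the forcing integral stays bounded by an absolute constant, so $\Phi(\tau)\le R_1'$ once $\tau-s$ is large. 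Coercivity of $\Phi$ then gives $\|u(\tau)\|_{X^{1/2}}^2\le R_1$, which combined with Lemma \ref{utd} bounds $\|S(\tau,s)w_0\|_V^2$ by an absolute $R_1$ and produces the absorbing ball $B(t)$; by Definition \ref{sbd} the process $\{S(t,s):t\ge s\}$ is strongly bounded dissipative.
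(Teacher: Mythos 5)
Your opening moves coincide with the paper's: Lemma \ref{utd} gives the data-independent velocity bound $R_0$ after a transient, and your perturbed functional $\Phi=E+\epsilon\langle\partial_tu,u\rangle$ is exactly the paper's $\Phi_\delta$ of \eqref{vf}, with the same coercivity estimate \eqref{vx}. But the step you explicitly defer (``making this balance close is the crux'') is precisely where the hypotheses \eqref{bt0}--\eqref{bt1} bite, and the device you sketch for it does not close. Completing the square in $-\alpha\|\partial_tu\|^2-\epsilon\alpha\langle\partial_tu,u\rangle$ leaves the positive remainder $\tfrac{\epsilon^2}{4}\alpha(t)\|u\|_{L^2}^2$, and under \eqref{bt0}--\eqref{bt1} only the unit-interval averages of $\alpha$ are controlled, not $\alpha(t)$ pointwise, so this cannot be absorbed instant-by-instant into $-\epsilon\kappa\|u\|_{X^{1/2}}^2$. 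Your fallback --- carrying it ``into the coefficient of $\Phi$'' to get $\Phi'\leq-\psi(t)\Phi+g$ --- requires converting the available decay $-\epsilon\kappa\|u\|_{X^{1/2}}^2$ plus velocity terms into $-\psi\Phi$, i.e.\ an upper bound of $\Phi$ by the quadratic energy. At critical growth this fails: \eqref{fcon} only yields $-\int_\Omega F(u)\,\mathrm{d}x\leq C\bigl(1+\|u\|^{\gamma+2}_{L^{\gamma+2}(\Omega)}\bigr)$, a superquadratic upper bound, and the paper assumes no condition of the type $uf(u)\geq\theta F(u)-C$ that would repair it; quadratizing via the Section~3 bound $\|u\|_{X^{1/2}}\leq C(D)$ makes $\psi$, and hence the Gronwall output, depend on the bounded set $D$, destroying the data-independent radius. (Your stated requirement $\psi\geq0$ is likewise unattainable where $\alpha$ is large, though Lemma \ref{G2} does not actually need it.)

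A second concrete gap: you propose to retain the $\gamma$-weighted cross term as a forcing controlled by the \emph{data-dependent} bound on $\|u\|_{X^{1/2}}$ from Section~3. In the Gronwall output $\int_s^t g(\tau)\exp\{-\int_\tau^t\psi\}\mathrm{d}\tau$, a forcing $g(\tau)=C(D)\gamma(\tau)$ does \emph{not} vanish in the pullback limit $s\to-\infty$: the mass of $\gamma$ in a fixed neighborhood of the final time is undamped by the exponential, contributing roughly $C(D)\int_{t-1}^t\gamma$, so your claim that ``the forcing integral stays bounded by an absolute constant'' is false as written. The repair is to use only data-free quantities there, e.g.\ $\epsilon\gamma(\tau)|\langle\partial_tu,u\rangle|\leq\tfrac{\epsilon\kappa}{2}\|u\|_{X^{1/2}}^2+C\mu_1^{-1}\kappa^{-1}R_0\,\gamma(\tau)^2$ together with the bound $\gamma\leq M_\gamma$ that the paper invokes at \eqref{gamma_bdd}. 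More importantly, note what the paper does instead of a single differential inequality: it fixes $M_\alpha$, splits the time axis into $A=\{-M_\gamma\leq\beta(t)\leq M_\alpha\}$ and $B=\{\beta(t)>M_\alpha\}$, proves $m(A)=\infty$ from \eqref{bt0}, and runs two separate monotonicity arguments --- $\Phi_\delta$ decreases at a definite rate on $A$ whenever $\|u\|^2_{X^{1/2}}\geq R_1$, while the unperturbed energy $\Phi_0$ decreases on $B$ whenever $\|\partial_tu\|^2\geq\tfrac12 R_0$ --- accumulating decay proportional to $m(A\cap[s,t])$ and $m(B\cap[s,t])$. This measure-splitting avoids ever comparing the dissipation with $\Phi$ itself, which is exactly the comparison your outline cannot justify; without it (or an equivalent mechanism) your proposal is a program, not a proof.
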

\begin{proof}Let $B\subset V$ be a bounded set.For each $w_0=(u_0,v_0)^T\in B$, let $w(t)=(u(t),\partial_tu(t))^T$ be the corresponding solution of \eqref{mp}. Consider the continuous functional $\Phi_\delta: X\to\mathbb{R}$ defined by
\begin{equation}\label{vf}
  \Phi_\delta(w) = \frac{1}{2}\|u\|^2_{X^{\frac{1}{2}}} + \frac{1}{2}\|\partial_tu\|_{L^2(\Omega)}^2 + \delta(t)(u,\partial_tu)-\int_{\Omega}F(u)\mathrm{d}x,
\end{equation}
where the constant $\delta>0$ and will be chosen appropriately later.

Note that it follows from \eqref{sconc} that for arbitrary $\epsilon\in\mathbb{R}^+$  and some constant $C_{\epsilon}>0$
\begin{eqnarray*}
  \frac{1}{2}\|u\|^2_{X^{\frac{1}{2}}} + \frac{1}{2}\|\partial_tu\|_{L^2(\Omega)}^2 &=& \Phi_\delta(w) - \delta(u,\partial_tu) + \int_{\Omega}F(u)\mathrm{d}x  \\
   &\leq& \Phi_\delta(w) + \frac{\delta}{\sqrt{\mu_1}}\|u\|_{X^{\frac{1}{2}}}\|\partial_tu\|_{L^2(\Omega)} + \frac{1}{2}c_0\|u(t)\|^2_{L^2(\Omega)} + c_1\int_{\Omega}|u(x,t)|\mathrm{d}x \\
   &\leq&  \Phi_\delta(w) + \frac{\delta}{2\mu_1}\|u\|^2_{X^{\frac{1}{2}}}+\frac{\delta}{2}\|\partial_tu\|^2_{L^2(\Omega)} + \frac{c_0+\epsilon}{2\mu_1}\|u\|^2_{X^{\frac{1}{2}}}  + C_{\epsilon}
\end{eqnarray*}
where we used Poincar\'{e}'s inequality and Young's inequality, thus,
\begin{equation*}
  \big(\frac{1}{2}-\frac{\delta}{2\mu_1}-\frac{c_0+\epsilon}{2\mu_1}\big)\|u\|^2_{X^{\frac{1}{2}}} + \big(\frac{1}{2} - \frac{\delta}{2}\big)\|\partial_tu\|^2_{L^2(\Omega)} \leq \Phi_\delta(w) + C_{\epsilon},
\end{equation*}
and we chose that $\delta<\min(1,\mu_1)$ and $\epsilon$ sufficiently small to imply that some constants $\bar{c}_0>0$ and $\bar{c}_1>0$ which are independent of time or initial data
\begin{equation}\label{vx}
  \|u\|^2_{X^{\frac{1}{2}}} + \|\partial_tu\|^2_{L^2(\Omega)} \leq \bar{c}_0\Phi_\delta(w) + \bar{c}_1.
\end{equation}

Due to the fact that $\gamma(t)$ is a continuous and integrable function, there exists a $0<M_{\gamma}<\infty$ such that for almost every $\tau\in(-\infty,\infty)$
   \begin{equation}\label{gamma_bdd}
      0\leq \gamma(\tau)\leq M_\gamma.
   \end{equation}

Now we choose some positive constant $C_{\gamma}+ C_{\beta}<M_\alpha<\infty$ and define that
\begin{equation}\label{A}
      A:=\{t\in\mathbb{R}|-M_\gamma\leq\beta(t)\leq M_{\alpha}\},
\end{equation}
   and
\begin{equation}\label{B}
      B:=\{t\in\mathbb{R}|\beta(s) > M_{\alpha}\}.
\end{equation}
According to our assumption \eqref{bt0} we can notice that $m(A)=\infty$. If not, we have \\ $\sum_{i=0}^{\infty}m(A\cap[i,i+1])<\infty$ and one can deduce that $m(A\cap[i,i+1])\to 0$ as $i\to\infty$, i.e., for $0<\epsilon<1$ there exists $n_A>0$ such that $m(A\cap[i,i+1])<\epsilon$ for $i\geq n_A$. Since $A\cup B = \mathbb{R}$, we have $m(B\cap[i,i+1])$ for $\geq 1-\epsilon$ for any $i\geq n_A$. Then for $t\geq n_A$ we can obtain \\$\int_t^{t+1}\beta(s)\mathrm{d}s \geq (1-\epsilon)M_{\alpha} - \epsilon M_{\gamma}>C_{\beta}$ which contradicts with our assumption \eqref{bt0}.

It is clear that from the equation \eqref{mp} we have
\begin{equation*}
   \begin{aligned}
     \frac{d}{dt}\Phi_\delta(w) =& a(u,\partial_tu) + (\partial_tu,\partial_{tt}u) - (f(u),\partial_tu) + \delta\|\partial_tu\|^2_{L^2(\Omega)} + \delta(u,\partial_{tt}u)\\
     =& (-\mathcal{L}u,\partial_t) + (\partial_tu,-\beta(t)\partial_tu+\mathcal{L}u+f(u)) - (f(u),\partial_tu) + \delta\|\partial_tu\|^2_{L^2(\Omega)}\\
     & + \delta(u,-\beta(t)\partial_tu + \mathcal{L}u + f(u)) \\
     =& -(\beta(t)-\delta)\|\partial_tu\|^2_{L^2(\Omega)} - \delta\|u\|^2_{X^{\frac{1}{2}}} - \delta\beta(t)(u,\partial_tu) + \delta\int_{\Omega}uf(u)\mathrm{d}x \\
     \leq&  -(\beta(t)-\delta)\|\partial_tu\|^2_{L^2(\Omega)} -\left(\delta-\frac{\delta(c_0+\epsilon)}{2\mu_1}\right)\|u\|^2_{X^{\frac{1}{2}}} +  \delta\beta(t)(u,\partial_tu) +C_{\epsilon},
   \end{aligned}
\end{equation*}
where we used the Cauchy-Schwartz inequality, Poincar\'{e}'s inequality and Young's inequality for any $0<\epsilon<\mu_1$. We can see from \eqref{sconc} that $c_0<\mu_1$, as long as $\epsilon<\mu_1$ in the above inequality we can guarantee that $\delta-\frac{\delta(c_0+\epsilon)}{2\mu_1}>0$.

Integrating the above equality over $[s,t]$ we have
\begin{equation*}
  \begin{aligned}
     \Phi_\delta(w(t)) \leq& \int_s^t\Big(-(\beta(\tau)-\delta)\|\partial_tu\|^2_{L^2(\Omega)} -(\delta-\frac{\delta(c_0+\epsilon)}{2\mu_1})\|u\|^2_{X^{\frac{1}{2}}} +  \delta\beta(\tau)(u,\partial_tu) +C_{\epsilon}\Big)\mathrm{d}\tau\\
     & + \Phi_\delta(w_0) \\
     \leq&  \int_s^t\Big((\gamma(\tau)+\delta)\|\partial_tu\|^2_{L^2(\Omega)} -(\delta-\frac{\delta(c_0+\epsilon)}{2\mu_1})\|u\|^2_{X^{\frac{1}{2}}} +  \delta\beta(\tau)(u,\partial_tu) +C_{\epsilon}\Big)\mathrm{d}\tau\\
     & + \Phi_\delta(w_0)
  \end{aligned}
\end{equation*}

Multiply $\chi_A$ on both sides of the above inequality:
\begin{equation*}
   \begin{aligned}
      \chi_A\Phi_\delta(w(t)) \leq& \int_{A\cap[s,t]}\Big((\gamma(\tau)+\delta)\|\partial_tu\|^2_{L^2(\Omega)} -(\delta-\frac{\delta(c_0+\epsilon)}{2\mu_1})\|u\|^2_{X^{\frac{1}{2}}} +  \delta\beta(\tau)(u,\partial_tu) +C_{\epsilon}\Big)\mathrm{d}\tau\\
      & + \Phi_\delta(w_0) \\
      \leq& \int_{A\cap[s,t]}\Big((\gamma(\tau)+\delta)\|\partial_tu\|^2_{L^2(\Omega)} -(\delta-\frac{\delta(c_0+\epsilon)}{2\mu_1})\|u\|^2_{X^{\frac{1}{2}}}\\
       &\hspace{20mm}+  \delta(M_{\gamma} + M_{\alpha})(u,\partial_tu) +C_{\epsilon}\Big)\mathrm{d}\tau + \Phi_\delta(w_0)  \\
     \leq&  \int_{A\cap[s,t]}\Big((\gamma(\tau) + \delta + \delta C_{\epsilon_1}(M_{\gamma}+M_{\alpha}))\|\partial_tu\|^2_{L^2(\Omega)}\\
     &\hspace{20mm}-(\delta-
     \frac{\delta(c_0+\epsilon)}{2\mu_1}-\frac{\delta\epsilon_1(M_{\gamma}+M_{\alpha})}{\mu_1})\|u\|^2_{X^{\frac{1}{2}}} + C_{\epsilon}\Big)\mathrm{d}\tau + \Phi_\delta(w_0),
   \end{aligned}
\end{equation*}
where we used Poincar\'{e}'s inequality and Cauchy-Schwarz inequality with $\epsilon>0$. We can guarantee $\delta-
   \frac{\delta(c_0+\epsilon)}{2\mu_1}-\frac{\delta\epsilon_1(M_{\gamma}+M_{\alpha})}{\mu_1}>0$ by choosing $0<\epsilon_1<\frac{\mu_1}{M_{\gamma}+M_{\alpha}}$.

Apply Lemma \ref{utd}, for any $t-s>T_0$ we have
\begin{equation*}
  \begin{aligned}
     \chi_A\Phi_\delta(w(t)) \leq& \int_{A\cap[s,t]}\Big((\gamma(\tau) + \delta + \delta C_{\epsilon_1}(M_{\gamma}+M_{\alpha}))R_0-(\delta-
     \frac{\delta(c_0+\epsilon)}{2\mu_1}-\frac{\delta\epsilon_1(M_{\gamma}+M_{\alpha})}{\mu_1})\|u\|^2_{X^{\frac{1}{2}}}\\
      &\hspace{30mm}+ C_{\epsilon}\Big)\mathrm{d}\tau + \Phi_\delta(w_0)  \\
     \leq& r_0R_0 + \int_{A\cap[s,t]}\Big((\delta + \delta C_{\epsilon_1}(M_{\gamma}+M_{\alpha}))R_0-(\delta-
     \frac{\delta(c_0+\epsilon)}{2\mu_1}-\frac{\delta\epsilon_1(M_{\gamma}+M_{\alpha})}{\mu_1})\|u\|^2_{X^{\frac{1}{2}}}\\
      &\hspace{30mm}+ C_{\epsilon}\Big)\mathrm{d}\tau + \Phi_\delta(w_0),
  \end{aligned}
\end{equation*}
where $r_0 := \int_{-\infty}^{\infty}\gamma(s)\mathrm{d}s$.

Combining with the inequality \eqref{vx} implies that for $$\|u\|^2_{X^{\frac{1}{2}}}\geq R_1 > \frac{2\mu_1\delta R_0^2 + C_{\epsilon_1}\delta(M_{\gamma}+M_{\alpha})R_0 + 2\mu_1C_{\epsilon}}{\delta\mu_1 -\delta(c_0+\epsilon)-2\epsilon_1\delta(M_{\gamma}+M_{\alpha})} $$ we have
\begin{equation}\label{A2}
  \chi_A(\|\partial_tu\|^2_{L^2(\Omega)} + \|u\|^2_{X^{\frac{1}{2}}}) \leq \bar{c}_0(-C_{\delta}m(A\cap[s,t])+ r_0R_0 + \Phi_\delta(w_0))+ \bar{c}_1,
\end{equation}
where $C_{\delta}>0$.

On the other hand, for $\delta=0$ we have
\begin{equation*}
  \frac{d}{dt}\Phi_0(w(t)) = \beta(t)\|\partial_tu\|^2_{L^2(\Omega)}.
\end{equation*}

Integrate the above equality on $[s,t]$,
\begin{equation}\label{B1}
  \Phi_0(w(t))=-\int_s^t\beta(\tau)\|\partial_tu\|^2_{L^2(\Omega)}\mathrm{d}\tau + \Phi_0(w_0),
\end{equation}
then we multiply $\chi_B$ on both sides of \eqref{B1}
\begin{eqnarray*}
  \chi_B\Phi_0(w(t)) &=& -\int_{B\cap[s,t]}\beta(\tau)\|\partial_tu\|^2_{L^2(\Omega)}\mathrm{d}\tau + \Phi_0(w_0) \\
   &\leq&  -\int_{B\cap[s,t]}M_{\alpha}\|\partial_tu\|^2_{L^2(\Omega)}\mathrm{d}\tau + \Phi_0(w_0).
\end{eqnarray*}

If $\|\partial_tu\|^2_{L^2(\Omega)} \geq \frac{1}{2}R_0$, then
\begin{equation}\label{B2}
  \chi_B\Phi_0(w(t)) \leq -M_{\alpha}R_0m(B\cap[s,t]) + 2\Phi_0(w_0),
\end{equation}
also with the same method for \eqref{vx} we obtain
\begin{equation}\label{vxx}
  \|u\|^2_{X^{\frac{1}{2}}} + \|\partial_tu\|^2_{L^2(\Omega)} \leq \tilde{c}_0\Phi_0(w) + \tilde{c}_1.
\end{equation}

Then we combine \eqref{A2}, \eqref{B2} and \eqref{vxx} we have
\begin{equation*}
   \begin{aligned}
      \|\partial_tu\|^2_{L^2(\Omega)} + \|u\|^2_{X^{\frac{1}{2}}} \leq& \bar{c}_0(-C_{\delta}m(A\cap[s,t])+ r_0R_0 + \Phi_{\delta}(w_0))+ \bar{c}_1\\
      & \tilde{c}_0(-M_{\alpha}R_0m(B\cap[s,t]) + 2\Phi(w_0)) + \tilde{c}_1.
   \end{aligned}
\end{equation*}

Therefore, since we proved earlier that $m(A)=\infty$, there exists $T_A>0$ such that
\begin{equation}\label{A1}
  \|\partial_tu\|^2_{L^2(\Omega)} + \|u\|^2_{X^{\frac{1}{2}}} \leq R_0 + \min(R_1,\bar{c}_1+\tilde{c}_1)
\end{equation}
for all $t-s>\max(T_0,T_A)$.

This proves that $\{S(t,s):t\geq s\}$ is strongly dissipative in the sense of Definition \ref{sbd}, i.e. for any bounded set $B\subset V$ there exists an $R>0$ and a time $T(B)$ such that
 \begin{equation}\label{abb}
   S(t+s,s)B \subset B_V(0,R)\qquad \text{for all}\quad t\geq T(B),
 \end{equation}
uniformly for all $s\in\mathbb{R}$.
\end{proof}

\section{Pullback attractor}
\noindent

The main result of the present paper is the following theorem.
\begin{theorem}\label{mr}Let the non-linearity $f$ satisfies the growth restriction \eqref{fcon} and dissipativity assumption \eqref{scon}. Assume also that the coefficient of damping term satisfies the conditions \eqref{bt0} and \eqref{bt1}. Then \eqref{mp} has a pullback attractor $\mathcal{A}(\cdot)$.
\end{theorem}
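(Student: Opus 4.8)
The plan is to invoke the abstract criterion of Theorem~\ref{eth}. The existence of a bounded pullback absorbing family $B(\cdot)$ and the strong pullback bounded dissipativity of the process $\{S(t,s):t\geq s\}$ have already been secured in Lemma~\ref{pa}. Consequently, only one hypothesis of Theorem~\ref{eth} remains to be checked, namely that $\{S(t,s):t\geq s\}$ is \emph{pullback asymptotically compact}; once this is in hand, Theorem~\ref{eth} delivers the compact pullback attractor $\mathcal{A}(\cdot)$ with $\mathcal{A}(t)=\omega(\bar B(t),t)$ at once. Thus I would fix $t\in\mathbb{R}$, a sequence $s_k\to-\infty$ and a bounded sequence $\{x_k\}\subset V$, and aim to extract a subsequence of $\{S(t,s_k)x_k\}$ convergent in $V$. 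Boundedness of this sequence (for $k$ large) is immediate from \eqref{abb}, so the real content is \emph{precompactness}.

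To produce precompactness under the \emph{critical} growth \eqref{fcon} I would split each trajectory $u_k:=u(\,\cdot\,;s_k,x_k)$ into a pullback-decaying part and a part lying in a space compactly embedded in $V$. A plain linear-versus-nonlinear splitting fails here, because the compactness of the embedding \eqref{qr1} is lost at the critical exponent $q=\tfrac{2Q}{Q-2}$ and, under \eqref{fcon}, $f(u)$ is controlled only in $X^{0}=L^2(\Omega)$ (as in \eqref{ly1}). I would therefore decompose the nonlinearity as $f=f_0+f_1$ in the spirit of Arrieta--Carvalho--Hale: let $f_1\in C^1(\mathbb{R})$ be globally Lipschitz with $f_1'$ of compact support, and set $f_0:=f-f_1$, chosen via \eqref{scon} so that $f_0$ retains the critical growth but is dissipative, $f_0'(s)\le\kappa<\mu_1$. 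Writing $u_k=v_k+w_k$, let $v_k$ solve the dissipative critical equation $\partial_{tt}v_k+\beta(t)\partial_tv_k=\mathcal{L}v_k+f_0(v_k)$ with data $x_k$ at time $s_k$, and let $w_k$ solve $\partial_{tt}w_k+\beta(t)\partial_tw_k=\mathcal{L}w_k+\big(f(u_k)-f_0(v_k)\big)$ with zero data. For $v_k$ I would run the energy functional $\Phi_\delta$ of \eqref{vf} with the monotonicity $f_0'\le\kappa$, and use the effective damping $\int_{s_k}^{t}\alpha(\tau)\,\mathrm{d}\tau\to\infty$ together with $\int\gamma<\infty$ from \eqref{bt1} (cf. Remark~\ref{rmk1}) to obtain, through the time-dependent Gronwall Lemma~\ref{G2}, the pullback decay $\|(v_k,\partial_tv_k)(t)\|_V\to0$ as $s_k\to-\infty$.

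The heart of the argument, and the step I expect to be the main obstacle, is the \emph{regularity gain} for $w_k$. Its forcing term rearranges as $f(u_k)-f_0(v_k)=\big(f(u_k)-f(v_k)\big)+f_1(v_k)$; the bounded-derivative term $f_1(v_k)$ is bounded in $H=X^{1/2}$, while the difference $f(u_k)-f(v_k)$ must be shown to lie, uniformly in $k$, in some fractional space $X^{\sigma/2}$ with $\sigma>0$. This is exactly where criticality bites: one cannot simply quote a subcritical gain, and I would instead exploit the interpolation embeddings \eqref{q1}--\eqref{q3} together with the Lipschitz estimate underlying \eqref{ly1}, applied to $u_k-v_k=w_k$ (a lower-order quantity), inside a bootstrap on the regularity of $w_k$. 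This would place $(w_k,\partial_tw_k)(t)$ in a bounded subset of $V^{\sigma}:=X^{(1+\sigma)/2}\times X^{\sigma/2}$, which by \eqref{qr1} embeds compactly into $V$, so $\{w_k(t)\}$ is precompact. Writing $S(t,s_k)x_k=(v_k+w_k,\partial_tv_k+\partial_tw_k)(t)$ and combining the decay of the first summand with the precompactness of the second yields a convergent subsequence; pullback asymptotic compactness follows, and Theorem~\ref{eth} completes the proof. Throughout, the sign-changing damping forces every estimate to be propagated uniformly in $s_k$ by means of Lemma~\ref{G2} rather than a constant-coefficient Gronwall inequality, which is the second delicate point.
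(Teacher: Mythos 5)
Your reduction is the same as the paper's: Lemma~\ref{pa} plus pullback asymptotic compactness plus Theorem~\ref{eth}. But the way you propose to get compactness is \emph{not} the paper's route, and as written it contains a genuine gap at exactly the step you flag as ``the heart of the argument.'' The paper never splits $f$ or the trajectory; instead it proves Lemma~\ref{ac} by the Chueshov--Lasiecka energy-reconstruction method via the contractive-function criterion (Theorem~\ref{cptness}): for the difference $w=u_1-u_2$ of two solutions it multiplies the difference equation by $\partial_t w$ and by $w$, integrates repeatedly (using Lemma~\ref{G2} and Remark~\ref{rmk1} to handle the sign-changing damping), arrives at the estimate \eqref{T3} with the functional $\psi_{T,s}$ of \eqref{T2}, and then verifies contractivity of $\psi_{T,s}$ using only the \emph{subcritical} compact embedding in \eqref{qr1}, the weak-$\ast$ convergences \eqref{u_star_weak_convergence}--\eqref{u_t_star_weak_convergence}, and the potential-energy cancellation \eqref{ft1}. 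The whole point of that method is that it works at the critical exponent precisely because no regularity gain for the nonlinearity is ever needed.

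Your Arrieta--Carvalho--Hale-type decomposition, by contrast, has two unproved steps that do not follow from the paper's hypotheses. First, the bootstrap placing $f(u_k)-f(v_k)$ in some $X^{\sigma/2}$ with $\sigma>0$: under \eqref{fcon} with $\gamma=\frac{q-2}{2}$, the estimate \eqref{ly1} controls this difference only in $L^2(\Omega)=X^0$, and the interpolation embeddings \eqref{q1}--\eqref{q3} yield a positive gain only if $w_k$ is already known to be better than $V$-bounded --- which is what the bootstrap is supposed to produce, so its first step is circular at criticality. The classical critical-exponent version of this argument moreover uses $C^1$ or $C^2$ smoothness of $f$ (you implicitly assume differentiability when you impose $f_0'\le\kappa<\mu_1$), whereas the paper assumes $f$ only locally Lipschitz with \eqref{fcon} and \eqref{scon}. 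Second, the pullback decay $\|(v_k,\partial_t v_k)(t)\|_V\to 0$: since $\beta\in L^\infty_{loc}$ is \emph{not} assumed pointwise bounded above (only the integral bounds \eqref{bt0}--\eqref{bt1} hold), the perturbed energy $\Phi_\delta$ of \eqref{vf} produces the cross term $\delta\beta(t)(v_k,\partial_t v_k)$, which cannot be absorbed uniformly when $\beta$ spikes; this is exactly why the proof of Lemma~\ref{pa} has to split time into the sets $A$ and $B$ of \eqref{A}--\eqref{B} and, even so, obtains only absorption \eqref{abb}, never uniform exponential decay. Your appeal to Lemma~\ref{G2} does not by itself deliver the decay of the nonlinear $v_k$-equation under these damping hypotheses. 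To repair the proof along the paper's lines, replace the splitting entirely by the contractive-function argument sketched above.
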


In this section we recall a well-known compactness criterion established by Chuesshov and Lasiecka \cite{II, II1} for autonomous systems. Non-autonomous version of the results were presented in \cite{CF, MF}, one recast the non-autonomous equation by the skew-product system and the other considered the problem as a process. The following compactness criterion for non-autonomous system we presented is similar to the Theorem 3.2 in \cite{MF} with $X=X_t$ and we omit the proof here.

\begin{definition}[\cite{CF}] Let $X$ be a Banach space and $B$ be a bounded subset of $X$. We call a function $\psi(\cdot,\cdot)$, defined on $X\times X$, a contractive function on $B\times B$ if for any sequence $\{x_n\}_{n=1}^{\infty}\subset B$, there is a subsequence $\{x_{n_{k}}\}_{k=1}^{\infty}\subset\{x_n\}_{n=1}^{\infty}$ such that
\begin{equation*}
  \lim_{k\to\infty}\lim_{l\to\infty}\psi(x_{n_{k}},x_{n_{l}}) = 0.
\end{equation*}
\end{definition}

\begin{theorem}\label{cptness}Let $X$ be a Banach spaces and $S(t,s):X\to X$ be an evolution process that possesses a pullback absorbing family $\hat{B}_0=\{B_0(s)\}_{s\in\mathbb{R}}$. Suppose that for any $t\in\mathbb{R}$ and $\epsilon>0$ there exists a time $s_{\epsilon}<t$ and a contractive function $\psi_{\epsilon}:B_0(s_{\epsilon})\times B_0(s_{\epsilon})\to\mathbb{R}$, such that
\begin{equation*}
  \|S(t,s_{\epsilon})x-S(t,s_{\epsilon})y\|_X \leq \epsilon + \psi_{\epsilon}(x,y).
\end{equation*}
Then the process is pullback asymptotically compact.
\end{theorem}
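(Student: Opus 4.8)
The plan is to verify the definition of pullback asymptotic compactness directly. Fix $t\in\mathbb{R}$, take any sequence $s_k\to-\infty$ with $s_k\leq t$ and any bounded sequence $\{x_k\}\subset D$, and aim to extract a convergent subsequence from $z_k:=S(t,s_k)x_k$. Since $X$ is a Banach space, it suffices to produce a Cauchy subsequence, and the standard criterion I would rely on is this: if $\limsup_{i\to\infty}\limsup_{j\to\infty}\|z_{k_i}-z_{k_j}\|_X=0$ along some subsequence $\{k_i\}$, then that subsequence is Cauchy, by a short $\epsilon/2$ argument (fix $i$ with $\limsup_j\|z_{k_i}-z_{k_j}\|_X<\epsilon/2$, then the tail is within $\epsilon$ of $z_{k_i}$). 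The whole proof then reduces to constructing, by a diagonal procedure, a single subsequence along which this iterated $\limsup$ vanishes.

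First I would set up the factorization. For each $n$, apply the hypothesis with $\epsilon=1/n$ to obtain a time $s^{(n)}:=s_{1/n}<t$ and a contractive function $\psi_n$ on $B_0(s^{(n)})\times B_0(s^{(n)})$ satisfying $\|S(t,s^{(n)})x-S(t,s^{(n)})y\|_X\leq 1/n+\psi_n(x,y)$. Because $\hat{B}_0$ is pullback absorbing, for the bounded set $D$ there is $K_n$ such that $y_k^{(n)}:=S(s^{(n)},s_k)x_k\in B_0(s^{(n)})$ whenever $k\geq K_n$; here I use $s_k\to-\infty$ to guarantee $s_k\leq s^{(n)}$ for $k$ large, so that the process property $S(t,s_k)=S(t,s^{(n)})S(s^{(n)},s_k)$ applies and gives $z_k=S(t,s^{(n)})y_k^{(n)}$.

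Next comes the nested-subsequence extraction. Starting from $\sigma_0=\mathbb{N}$, I would inductively restrict $\sigma_{n-1}$ to indices $\geq K_n$ and apply the contractiveness of $\psi_n$ to the sequence $\{y_k^{(n)}\}_{k\in\sigma_{n-1}}\subset B_0(s^{(n)})$, extracting an infinite $\sigma_n\subset\sigma_{n-1}$ with $\lim_{i}\lim_{j}\psi_n(y_i^{(n)},y_j^{(n)})=0$, indices taken in $\sigma_n$. Feeding this into the factorization and the $\epsilon$-estimate yields $\limsup_{i}\limsup_{j}\|z_i-z_j\|_X\leq 1/n$ along $\sigma_n$. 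Choosing diagonal indices $k_n\in\sigma_n$ strictly increasing, the tail of $\{k_n\}$ lies in each $\sigma_m$, so its iterated $\limsup$ is $\leq 1/m$ for every $m$, hence equals $0$; the criterion above then makes $\{z_{k_n}\}$ Cauchy and therefore convergent.

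The main obstacle, and the place I would be most careful, is the interface between the pullback-absorbing property and the double-limit bookkeeping: I must check the ordering $t\geq s^{(n)}\geq s_k$ needed for the process identity, confirm that each $\sigma_n$ stays infinite after simultaneously imposing $k\geq K_n$ and performing the contractiveness extraction, and verify that the diagonal sequence genuinely inherits the bound $\leq 1/m$ for all $m$ (this works precisely because $k_n\in\sigma_n\subset\sigma_m$ for $n\geq m$). A related subtlety is that the contractiveness definition supplies only the iterated limit $\lim_i\lim_j$, not a joint one, so I would keep every estimate in iterated-$\limsup$ form throughout and never attempt to collapse it into a genuine double limit.
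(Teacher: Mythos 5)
Your proof is correct and is essentially the argument the paper itself relies on but omits (it defers to Theorem 3.2 of \cite{MF}, the non-autonomous version of the Chueshov--Lasiecka criterion as in \cite{CF}): factor $S(t,s_k)=S(t,s^{(n)})S(s^{(n)},s_k)$ using the cocycle property and the pullback absorbing family, extract nested subsequences via the contractiveness of each $\psi_{1/n}$, diagonalize, and conclude Cauchyness from the vanishing iterated $\limsup$. All the delicate points you flag --- the ordering $s_k\leq s^{(n)}\leq t$, the infiniteness of each $\sigma_n$, the monotonicity of iterated $\limsup$ under passage to subsequences --- are handled correctly, so there is no gap.
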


\begin{lemma}\label{ac}Under the assumptions of Theorem \ref{pa} the corresponding process is pullback asymptotically compact.
\end{lemma}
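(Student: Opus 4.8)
The plan is to verify the hypotheses of Theorem \ref{cptness} for the phase space $V = X^{\frac12}\times L^2(\Omega)$, using the pullback absorbing family $\hat B_0 = \{B_0(s)\}$ supplied by Lemma \ref{pa}. Fix $t\in\mathbb{R}$ and $\epsilon>0$. For initial data $x=(u_0,u_1)^T,\ y=(\bar u_0,\bar u_1)^T\in B_0(s_\epsilon)$ write $u(\tau)=S(\tau,s_\epsilon)x$, $\bar u(\tau)=S(\tau,s_\epsilon)y$ and set $w=u-\bar u$. Subtracting the two copies of \eqref{mp}, the difference solves
\[
\partial_{tt}w + \beta(\tau)\partial_t w = \mathcal{L}w + \big(f(u)-f(\bar u)\big),\qquad w(s_\epsilon)=u_0-\bar u_0,\ \ \partial_t w(s_\epsilon)=u_1-\bar u_1,
\]
with homogeneous Dirichlet data. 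The goal is to split $\|S(t,s_\epsilon)x-S(t,s_\epsilon)y\|_V$ into a part that decays as $s_\epsilon\to-\infty$ (yielding the $\epsilon$) and a lower–order part that constitutes a contractive function $\psi_\epsilon$.

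First I would control the linear part by the perturbed–energy method already used in Lemma \ref{pa}. With $g:=f(u)-f(\bar u)$ and
\[
\Psi(\tau)=\tfrac12\|\partial_t w\|^2_{L^2(\Omega)}+\tfrac12\|w\|^2_{X^{\frac12}}+\delta\,(w,\partial_t w)_{L^2(\Omega)},
\]
which for $\delta\in(0,\min\{1,\mu_1\})$ is equivalent to $\|w\|^2_{X^{\frac12}}+\|\partial_t w\|^2_{L^2(\Omega)}$ exactly as in \eqref{vx}, multiplying the difference equation by $\partial_t w+\delta w$ gives
\[
\frac{d}{d\tau}\Psi+\delta\|w\|^2_{X^{\frac12}}+(\beta(\tau)-\delta)\|\partial_t w\|^2_{L^2(\Omega)} = \big(g,\partial_t w+\delta w\big)-\delta\beta(\tau)(w,\partial_t w).
\]
Splitting $\beta=\alpha-\gamma$ and using \eqref{bt0}--\eqref{bt1} precisely as in the dissipativity argument, the damping contributions combine into a nonnegative rate $\eta(\tau)$ with $\int_{s_\epsilon}^{\tau}\eta\to\infty$ as $\tau-s_\epsilon\to\infty$, so Lemma \ref{G2} yields
\[
\Psi(t)\le \Psi(s_\epsilon)\,e^{-\int_{s_\epsilon}^{t}\eta}+\int_{s_\epsilon}^{t}e^{-\int_{\sigma}^{t}\eta}\,\big(g,\partial_t w+\delta w\big)\,d\sigma .
\]
Since $\|x-y\|_V$ is uniformly bounded on $B_0(s_\epsilon)$, choosing $s_\epsilon$ so negative that $e^{-\int_{s_\epsilon}^{t}\eta}$ is small forces the first term below $\epsilon$.

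It remains to treat the nonlinear integral and exhibit it as a contractive function. Here I would integrate the top-order piece by parts in time,
\[
\int_{s_\epsilon}^{t}e^{-\int_{\sigma}^{t}\eta}(g,\partial_t w)\,d\sigma = \Big[e^{-\int_{\sigma}^{t}\eta}(g,w)\Big]_{s_\epsilon}^{t}-\int_{s_\epsilon}^{t}e^{-\int_{\sigma}^{t}\eta}\Big(\eta(\sigma)(g,w)+(\partial_\sigma g,w)\Big)\,d\sigma,
\]
so that $\partial_t w$ is traded for $w$; combined with interpolation between $L^2(\Omega)$ and $X^{\frac12}$ this absorbs a small multiple of $\sup_{[s_\epsilon,t]}\Psi$ into the left side and leaves a remainder controlled by the compact seminorm $\int_{s_\epsilon}^{t}\|u(\sigma)-\bar u(\sigma)\|^2_{L^{p}(\Omega)}\,d\sigma$ with $p<q$. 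I would then define $\psi_\epsilon(x,y)$ to be (a constant times) this remainder. Given any sequence $\{x_n\}\subset B_0(s_\epsilon)$, the a priori bounds of Lemma \ref{pa} make $\{u_n\}$ bounded in $C([s_\epsilon,t];X^{\frac12})\cap C^1([s_\epsilon,t];L^2(\Omega))$; by the compact embedding in \eqref{qr1} for subcritical $p$ together with Aubin--Lions one extracts a subsequence that is Cauchy in $C([s_\epsilon,t];L^{p}(\Omega))$, whence $\psi_\epsilon(x_{n_k},x_{n_l})\to0$ in the iterated limit, i.e.\ $\psi_\epsilon$ is contractive on $B_0(s_\epsilon)$. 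Theorem \ref{cptness} then delivers pullback asymptotic compactness.

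The main obstacle is the \emph{critical} exponent $\gamma=\frac{q-2}{2}$: there the embedding $X^{\frac12}\hookrightarrow L^{q}(\Omega)$ is not compact, so $f(u_n)$ cannot be shown to converge in $L^{q'}(\Omega)$ directly and the pairing $(g,\partial_t w)$ cannot be passed to the limit naively. The whole point of the contractive-function framework is to reduce everything to convergence in subcritical norms, where \eqref{qr1} is compact; making this rigorous requires the time integration by parts above and, in particular, estimating $\partial_\sigma g=f'(u)\partial_t u-f'(\bar u)\partial_t\bar u$ in a norm dual to $X^{\frac12}$ uniformly on the absorbing set (using $f'(u)\in L^{q/\gamma}$, $\partial_t u\in L^2$, hence $f'(u)\partial_t u\in L^{q'}$). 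Verifying this critical bound, and the consequent absorption of $\tfrac14\sup_{[s_\epsilon,t]}\Psi$, is the technical heart of the proof.
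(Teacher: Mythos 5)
Your overall framework coincides with the paper's (reduce to Theorem \ref{cptness}, write the difference equation for $w=u_1-u_2$, split the difference norm into a decaying part and a contractive function), but two of your key steps do not survive the hypotheses \eqref{bt0}--\eqref{bt1}. First, the perturbed-energy Gronwall inequality you claim for $\Psi$ is not available: $\beta$ is only bounded \emph{in unit-interval average} and may be negative on a set of times, so after multiplying by $\partial_t w+\delta w$ the cross term $-\delta\beta(\tau)(w,\partial_t w)$ is estimated by $\tfrac{\beta}{2}\|\partial_t w\|^2_{L^2(\Omega)}+\tfrac{\delta^2\beta}{2\mu_1}\|w\|^2_{X^{1/2}}$, and dominating the second piece by the available dissipation $\delta\|w\|^2_{X^{1/2}}$ requires the \emph{pointwise} bound $\beta(\tau)\le 2\mu_1/\delta$, which is not assumed ($\beta$ may have tall spikes of small width). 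This is precisely why Lemma \ref{pa} does not produce a decay rate but instead splits time into the sets $A$ and $B$ and works with the two functionals $\Phi_\delta$ and $\Phi_0$; so ``precisely as in the dissipativity argument'' does not yield your nonnegative rate $\eta$. The paper sidesteps this: it multiplies the difference equation only by $\partial_t w$ (no cross term appears), applies Lemma \ref{G2} with rate $2\beta$ and uses $\int_s^t\beta\gtrsim t-s$ (Remark \ref{rmk1}) to obtain decay of $\|\partial_t w\|^2_{L^2(\Omega)}$ alone in \eqref{Gr3}, and then recovers the $\|w\|^2_{X^{1/2}}$ component by the Chueshov--Lasiecka reconstruction --- integrating the energy identity twice in time and using the multiplier $w$ (identities \eqref{5} and \eqref{66}) --- arriving at $(T-s)E_w(T)\le \text{(decaying terms)}+\psi_{T,s}$.

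Second, your time integration by parts fails at the critical exponent in a way that cannot be repaired by absorption. The term $\int_{s_\epsilon}^t e^{-\int_\sigma^t\eta}(\partial_\sigma g,w)\,d\sigma$ carries no small factor: on the absorbing family one only has $\|\partial_\sigma g\|_{X^{-1/2}}\lesssim (1+\|u\|^{\gamma}_{L^q(\Omega)})\|\partial_t u\|_{L^2(\Omega)}=O(1)$ and $\int_{s_\epsilon}^t e^{-\int_\sigma^t\eta}\,d\sigma=O(1)$, so this contribution is of size $O\bigl(\sup_{[s_\epsilon,t]}\|w\|_{X^{1/2}}\bigr)$ with an $O(1)$ coefficient --- it is top order in $w$, cannot be absorbed into the left-hand side, and is not controlled by the subcritical seminorm $\int\|u-\bar u\|^2_{L^p(\Omega)}$, nor does it vanish along weakly convergent subsequences. (Moreover, $f$ is only locally Lipschitz by \eqref{fcon}, so $\partial_\sigma g=f'(u)\partial_t u-f'(\bar u)\partial_t\bar u$ is not even well defined without extra smoothness.) The paper's device is different and is the heart of the matter: the critical pairing $\int(f(u_1)-f(u_2),\partial_t w)$ is kept \emph{intact} inside the contractive function \eqref{T2}, and contractiveness is verified by expanding the product and telescoping the self-interaction terms through the primitive $F$, as in \eqref{ft1}, so that $\int\!\!\int \partial_t u_n\, f(u_n)$ collapses to $\int_\Omega F(u_n(T))-\int_\Omega F(u_n(t))$; the cross terms then pass to the limit using the star-weak convergences \eqref{u_star_weak_convergence}, \eqref{u_t_star_weak_convergence} together with the subcritical compactness \eqref{up}. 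As written, your argument has genuine gaps at both of these steps.
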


%

The idea of the proof is similar to that in \cite{II,II1}, and also in \cite{CF} for non-autonomous case. In order to prove pullback asymptotically compact for the process $S(t,s)$ generated by the problem \eqref{mp}, we start to derive some energy inequalities.

Let $B$ is a bounded subset of $V$, assume $u_1(t)$ and $u_2(t)$ are  weak solutions of \eqref{mp} with initial data $u_1(s)=u_0^1$, $\partial_tu_1(s)=u_1^1$, $u_2(s)=u_0^2$ and $\partial_tu_2(s)=u_1^2$, where $(u_0^1,u_1^1),~(u_0^2,u_1^2)\in B$.
Then we observe that $w=u_1-u_2$ is a weak solution of
\begin{equation}\label{ab}
  \partial_{tt}w + \beta(t)w_t = \mathcal{L}w + f(u_1) - f(u_2), \qquad x\in\Omega,~ t\geq s,
\end{equation}
with Dirichlet boundary condition and initial conditions
\begin{equation*}
  w(0) = u_0^1-u_0^2\quad\text{and}\quad \partial_{t}w(0)=u_1^1-u_1^2.
\end{equation*}

First multiply equation \eqref{ab} by $\partial_tw$ and integrate over $\Omega$, we obtain
\begin{equation}\label{3}
  \frac{1}{2}\frac{d}{dt}\|\partial_tw\|^2_{L^2(\Omega)} + \frac{1}{2}\frac{d}{dt}\|w\|^2_{X^{\frac{1}{2}}} + \beta(t)\|\partial_tw\|^2_{L^2(\Omega)} = (f(u_1)-f(u_2),\partial_tw),
\end{equation}
integrating \eqref{3} over $[s,t]$ with $s< t$, we have that
\begin{equation}\label{Gr1}
  \begin{aligned}
     \|\partial_tw\|^2_{L^2(\Omega)} + 2\int_s^t\beta(\tau)\|\partial_tw\|^2_{L^2(\Omega)}\mathrm{d}\tau \leq & 2\int_s^t(f(u_1)-f(u_2),\partial_tw)\mathrm{d}\tau + \|\partial_tw(s)\|^2_{L^2(\Omega)}\\
      &+ \|w(s)\|^2_{X^{\frac{1}{2}}}.
  \end{aligned}
\end{equation}

We apply the extension Gronwall Lemma \ref{G2} with
\begin{equation*}
  g(\tau) = 2(f(u_1)-f(u_2),\partial_tw(\tau)) + \frac{1}{t-s}\|w(s)\|^2_{X^{\frac{1}{2}}},
\end{equation*}
thus the following estimate hold
\begin{equation}\label{Gr2}
  \begin{aligned}
     \|\partial_tw(t)\|^2_{L^2(\Omega)} \leq & \|\partial_tw(s)\|^2_{L^2(\Omega)}\exp\left\{-2\int_s^t\beta(\sigma)\mathrm{d}\sigma\right\}\\
      &+ 2\int_s^t(f(u_1)-f(u_2),\partial_tw(\tau))\exp\left\{-2\int_\tau^t\beta(\sigma)\mathrm{d}\sigma\right\}\mathrm{d}\tau\\
      &+ \int_s^t\frac{1}{t-s}\|w(s)\|^2_{X^{\frac{1}{2}}}\exp\left\{-2\int_\tau^t\beta(\sigma)\mathrm{d}\sigma\right\}\mathrm{d}\tau.
  \end{aligned}
\end{equation}

It follows from Remark 3.4 that
\begin{equation}\label{Gr3}
   \begin{aligned}
      \|\partial_tw(t)\|^2_{L^2(\Omega)} \leq &\|\partial_tw(s)\|^2_{L^2(\Omega)}e^{-2(t-s)} + 2\int_s^t(f(u_1)-f(u_2),\partial_tw(\tau))e^{-2(t-\tau)}\mathrm{d}\tau\\
       &+ \frac{1}{t-s}\|w(s)\|^2_{X^{\frac{1}{2}}}\int_s^te^{-2(t-\tau)}\mathrm{d}\tau.
   \end{aligned}
\end{equation}

Next we construct a function which we will prove that it is a contractive function.  Define an energy functional
\begin{equation*}
  E_w(t) = \frac{1}{2}\|\partial_tw\|^2_{L^2(\Omega)} + \frac{1}{2}\|w\|^2_{X^{\frac{1}{2}}}.
\end{equation*}
integrating \eqref{3} over $[t,T]$ with $t\leq T$, we have that
\begin{equation}\label{4}
  E_w(T) + \int_t^T\beta(\tau)\|\partial_tw\|^2_{L^2(\Omega)}\mathrm{d}\tau = \int_t^T(f(u_1)-f(u_2),\partial_tw)\mathrm{d}\tau + E_w(t),
\end{equation}
where $(\cdot,\cdot)$ denotes the inner product in $L^2(\Omega)$.

And integrating \eqref{4} over $[s,T]$ with respect to $t$ we obtain
\begin{equation}\label{5}
  \begin{aligned}
     &(T-s)E_w(T)+ \int_s^T\int_t^T\beta(\tau)\|\partial_tw\|^2_{L^2(\Omega)}\mathrm{d}\tau\mathrm{d}t\\
     \leq &\int_s^T\int_t^T(f(u_1)-f(u_2),\partial_tw)\mathrm{d}\tau\mathrm{d}t + \int_s^TE_w(t)\mathrm{d}t.
  \end{aligned}
\end{equation}


Multiplying equation \eqref{ab} by $w$ and integrating over $\Omega$, we obtain
\begin{equation}\label{6}
  \frac{d}{dt}(\partial_tw,w) - \|\partial_tw\|^2_{L^2(\Omega)} + \beta(t)(\partial_tw,w) + \|w\|^2_{X^{\frac{1}{2}}} = (f(u_1)-f(u_2),w),
\end{equation}
then integrating \eqref{6} over $[s,T]$, we have
\begin{equation*}
  \begin{aligned}
     &\int_s^T\beta(t)(\partial_tw,w)\mathrm{d}t + (\partial_tw(T),w(T)) + \int_s^T\|w\|^2_{X^{\frac{1}{2}}}\mathrm{d}t\\
     = &\int_s^T\|\partial_tw\|^2_{L^2(\Omega)}\mathrm{d}t + (\partial_tw(s),w(s)) +\int_s^T(f(u_1)-f(u_2),w)\mathrm{d}t,
  \end{aligned}
\end{equation*}
therefore we deduce that
\begin{equation}\label{66}
  \begin{aligned}
     \int_s^T\|w\|^2_{X^{\frac{1}{2}}}\mathrm{d}t = &\int_s^T\|\partial_tw\|^2_{L^2(\Omega)}\mathrm{d}t + (\partial_tw(s),w(s)) - (\partial_tw(T),w(T)) \\
  &+\int_s^T(f(u_1)-f(u_2),w)\mathrm{d}t - \int_s^T\beta(t)(\partial_tw,w)\mathrm{d}t
  \end{aligned}
\end{equation}

Combine \eqref{5} and \eqref{66} we can obtain
\begin{equation*}
  \begin{aligned}
     &(T-s)E_w(T)+ \int_s^T\int_t^T\beta(\tau)\|\partial_tw\|^2_{L^2(\Omega)}\mathrm{d}\tau\mathrm{d}t\\
     \leq &\int_s^T\int_t^T(f(u_1)-f(u_2),\partial_tw)\mathrm{d}\tau\mathrm{d}t + \int_s^T\|\partial_tw\|^2_{L^2(\Omega)}\mathrm{d}t + \frac{1}{2}(\partial_tw(s),w(s))\\
     &-\frac{1}{2}(\partial_tw(T),w(T)) + \frac{1}{2}\int_s^T(f(u_1)-f(u_2),w)\mathrm{d}t - \frac{1}{2}\int_s^T\beta(t)(\partial_tw,w)\mathrm{d}t,
  \end{aligned}
\end{equation*}
hence
\begin{equation}\label{T1}
   \begin{aligned}
      (T-s)E_w(T)
     \leq &\int_s^T\int_t^T\gamma(\tau)\|\partial_tw\|^2_{L^2(\Omega)}\mathrm{d}\tau\mathrm{d}t + \int_s^T\int_t^T(f(u_1)-f(u_2),\partial_tw)\mathrm{d}\tau\mathrm{d}t\\
      &+ \int_s^T\|\partial_tw\|^2_{L^2(\Omega)}\mathrm{d}t
      + \frac{1}{2}(\partial_tw(s),w(s))-(\partial_tw(T),w(T))\\
      &+ \frac{1}{2}\int_s^T(f(u_1)-f(u_2),w)\mathrm{d}t - \frac{1}{2}\int_s^T\beta(t)(\partial_tw,w)\mathrm{d}t.
   \end{aligned}
\end{equation}

We can infer from \eqref{Gr3} that
\begin{equation}\label{wt3}
  \begin{aligned}
     \int_s^T\|\partial_tw\|^2_{L^2(\Omega)}\mathrm{d}t \leq &\|\partial_tw(s)\|^2_{L^2(\Omega)}\int_s^Te^{-2(t-s)}\mathrm{d}t+ \|w(s)\|^2_{X^{\frac{1}{2}}}\int_s^T\int_s^t\frac{1}{t-s}e^{-2(t-\tau)}\mathrm{d}\tau\mathrm{d}t\\ &+2\int_s^T\int_s^t(f(u_1)-f(u_2),\partial_tw(\tau))e^{-2(t-\tau)}\mathrm{d}\tau\mathrm{d}t,
  \end{aligned}
\end{equation}
and
\begin{equation}\label{wt2}
   \begin{aligned}
      &\int_s^T\int_t^T\gamma(\tau)\|\partial_tw\|^2_{L^2(\Omega)}\mathrm{d}\tau\mathrm{d}t\\
      \leq& \|\partial_tw(s)\|^2_{L^2(\Omega)} \int_s^T\int_t^T\gamma(\tau)e^{-2(\tau-s)}\mathrm{d}\tau\mathrm{d}t+
       \|w(s)\|^2_{X^{\frac{1}{2}}} \int_s^T\int_t^T\gamma(\tau)\frac{1}{\tau-s}\int_s^{\tau}e^{-2(\tau-\sigma)}\mathrm{d}\sigma\mathrm{d}\tau\mathrm{d}t\\
      &+ \int_s^T\int_t^T\gamma(\tau)\int_s^{\tau}(f(u_1)-f(u_2),\partial_tw(\sigma))e^{-2(\tau-\sigma)}\mathrm{d}\sigma\mathrm{d}\tau\mathrm{d}t, \\
      \leq& \|\partial_tw(s)\|^2_{L^2(\Omega)} \int_s^T e^{-2(t-s)}\int_t^T\gamma(\tau)\mathrm{d}\tau\mathrm{d}t + \|w(s)\|^2_{X^{\frac{1}{2}}} \int_s^T\int_t^T\gamma(\tau)\frac{1}{\tau-s}\int_s^{\tau}e^{-2(\tau-\sigma)}\mathrm{d}\sigma\mathrm{d}\tau\mathrm{d}t\\
      &+  \int_s^T\int_t^T\gamma(\tau)\int_s^{\tau}(f(u_1)-f(u_2),\partial_tw(\sigma))e^{-2(\tau-\sigma)}\mathrm{d}\sigma\mathrm{d}\tau\mathrm{d}t\\
      \leq&  C_{\gamma,B} \int_s^T e^{-2(t-s)}\mathrm{d}t + \int_s^T\int_t^T\gamma(\tau)\int_s^{\tau}(f(u_1)-f(u_2),\partial_tw(\sigma))e^{-2(\tau-\sigma)}\mathrm{d}\sigma\mathrm{d}\tau\mathrm{d}t \\
      &+ \|w(s)\|^2_{X^{\frac{1}{2}}} \int_s^T\int_t^T\gamma(\tau)\frac{1}{\tau-s}\int_s^{\tau}e^{-2(\tau-\sigma)}\mathrm{d}\sigma\mathrm{d}\tau\mathrm{d}t,\\
   \end{aligned}
\end{equation}
where $C_{\gamma,B}$ is a constant depends on $\int_{-\infty}^{\infty}\gamma(s)\mathrm{d}s$ and the radius of the bounded domain $B$.

Thus from \eqref{T1}, \eqref{wt3} and \eqref{wt2} we observe that
\begin{equation}\label{TT1}
   \begin{aligned}
     (T-s)E_w(T)
     \leq &C_{\gamma,B} \int_s^T e^{-2(t-s)}\mathrm{d}t + \int_s^T\int_t^T\gamma(\tau)\int_s^{\tau}(f(u_1)-f(u_2),\partial_tw(\sigma))e^{-2(\tau-\sigma)}\mathrm{d}\sigma\mathrm{d}\tau\mathrm{d}t \\
     &+ \|w(s)\|^2_{X^{\frac{1}{2}}} \int_s^T\int_t^T\gamma(\tau)\frac{1}{\tau-s}\int_s^{\tau}e^{-2(\tau-\sigma)}\mathrm{d}\sigma\mathrm{d}\tau\mathrm{d}t\\
      &+ \int_s^T\int_t^T(f(u_1)-f(u_2),\partial_tw)\mathrm{d}\tau\mathrm{d}t
     + \|\partial_tw(s)\|^2_{L^2(\Omega)}\int_s^Te^{-2(t-s)}\mathrm{d}t\\
     &+ 2\int_s^T\int_s^t(f(u_1)-f(u_2),\partial_tw(\tau))e^{-2(t-\tau)}\mathrm{d}\tau\mathrm{d}t \\
     &+ \|w(s)\|^2_{X^{\frac{1}{2}}}\int_s^T\int_s^t\frac{1}{t-s}e^{-2(t-\tau)}\mathrm{d}\tau\mathrm{d}t + \frac{1}{2}(\partial_tw(s),w(s))-(\partial_tw(T),w(T))\\
     &+ \frac{1}{2}\int_s^T(f(u_1)-f(u_2),w)\mathrm{d}t - \frac{1}{2}\int_s^T\beta(t)(\partial_tw,w)\mathrm{d}t.
   \end{aligned}
\end{equation}

Define
\begin{equation}\label{T2}
  \begin{aligned}
     \psi_{T,s}(u_1,u_2) =& \frac{1}{T-s}\bigg[\int_s^T\int_t^T(f(u_1)-f(u_2),\partial_tw)\mathrm{d}\tau\mathrm{d}t \\
     &  + \frac{1}{2}\int_s^T(f(u_1)-f(u_2),w)\mathrm{d}t - \frac{1}{2}\int_s^T\beta(t)(\partial_tw,w)\mathrm{d}t\\
     &+ \int_s^T\int_t^T\gamma(\tau)\int_s^{\tau}(f(u_1)-f(u_2),\partial_tw(\sigma))e^{-2(\tau-\sigma)}\mathrm{d}\sigma\mathrm{d}\tau\mathrm{d}t \\
     &+ 2\int_s^T\int_s^t(f(u_1)-f(u_2),\partial_tw(\tau))e^{-2(t-\tau)}\mathrm{d}\tau\mathrm{d}t\bigg],
  \end{aligned}
\end{equation}
then we have
\begin{equation}\label{T3}
   \begin{aligned}
      E_w(T) \leq &\frac{1}{T-s}\bigg[\frac{1}{2}(\partial_tw(s),w(s))-\frac{1}{2}(\partial_tw(T),w(T)) +   \|\partial_tw(s)\|^2_{L^2(\Omega)}\int_s^Te^{-2(t-s)}\mathrm{d}t\\
      &+ \|w(s)\|^2_{X^{\frac{1}{2}}}\int_s^T\int_s^t\frac{1}{t-s}e^{-2(t-\tau)}\mathrm{d}\tau\mathrm{d}t + C_{\gamma,B} \int_s^T e^{-2(t-s)}\mathrm{d}t\\
      &+ \|w(s)\|^2_{X^{\frac{1}{2}}} \int_s^T\int_t^T\gamma(\tau)\frac{1}{\tau-s}\int_s^{\tau}e^{-2(\tau-\sigma)}\mathrm{d}\sigma\mathrm{d}\tau\mathrm{d}t\bigg] + \psi_{T,s}(u_1,u_2).
   \end{aligned}
\end{equation}

With the above inequalities, we are now ready to prove the pullback asymptotic compactness i.e., Lemma \ref{ac}.

\begin{proof}  For any $\epsilon>0$, we can see from \eqref{T3} if we take $-s$ large enough and by virtue of the dominate convergence theorem, there exists
\begin{equation}\label{14}
  E_w(T) \leq \epsilon + \psi_{T,s}(u_1,u_2),
\end{equation}
where $u_1,~u_2$ belong to a bounded subset in $V = X^{\frac{1}{2}}\times L^2(\Omega)$ since we proved the well-posedness of the solutions of \eqref{mp}  for all $t\in\mathbb{R}$.

Therefore according to the Theorem \ref{cptness}, only if the function $\psi_{T,s}(\cdot,\cdot)$ is a contractive function then the proof of Lemma \ref{ac} is complete. Let $B\in V$ be a bounded set and $(u_n,\partial_tu_n)$ be the corresponding solutions of $(u_0^n,\partial_tu_0^n)\in B$ for the problem \eqref{mp}, $n=1,2,\cdots$. The rest work is to prove $\lim_{n\to\infty}\lim_{m\to\infty}\psi_{T,s}(u_n,u_m)=0$, the proof is similar to the one in \cite{CF}.

According to Sect.3 $(u_n,\partial_tu_n)$ is bounded in $X^{\frac{1}{2}}\times L^2(\Omega)$. Note that the compact embedding $X^{\frac{1}{2}}\hookrightarrow L^p(\Omega)$ is valid for $1\leq p<\frac{2Q}{Q-2}$, without loss of generality we assume that for any $s\leq t\in\mathbb{R}$
\begin{eqnarray}\
  \label{u_star_weak_convergence}u_n \to& u  \qquad &\ast\text{-weakly in}~ L^{\infty}(s,t;X^{\frac{1}{2}}), \\
  \label{up}u_n \to& u \qquad &\text{in}\quad L^{p+1}(s,t;L^{p+1}(\Omega)), \\
  \label{u_t_star_weak_convergence} \partial_t u_n \to& \partial_t u \qquad &\ast\text{-weakly in}~ L^{\infty}(s,t;L^2(\Omega)), \\
  \partial_t u_n \to& \partial_t u \qquad &\text{weakly in}~L^2(\Omega), \\
  u_n \to& u \qquad &\text{in}~ L^2(s,t;L^2(\Omega)).
\end{eqnarray}

Now we deal with each term in \eqref{T2}.

First, applying growth condition \eqref{fcon} and \eqref{up} we can obtain
\begin{equation*}
   \lim_{n\to\infty}\lim_{m\to\infty}\int_s^T\big(f(u_n)-f(u_m),u_n-u_m\big)\mathrm{d}t = 0.
\end{equation*}

Second, we derived that $f\in L^2(\Omega)$ in Sect.3, note that
\begin{equation*}
   \begin{aligned}
      &\int^T_t\int_{\Omega}(\partial_tu_{n}(\tau)-\partial_tu_m(\tau))(f(u_n)-f(u_m))\mathrm{d}x\mathrm{d}\tau\\
      =&\int^T_t\int_{\Omega}(\partial_tu_n(\tau))f(u_n)\mathrm{d}x\mathrm{d}\tau+\int^T_t\int_{\Omega}\partial_tu_mf(u_m)\mathrm{d}x\mathrm{d}\tau
      -\int^T_t\int_{\Omega}\partial_tu_n(\tau)f(u_m)\mathrm{d}x\mathrm{d}\tau\\
      &-\int^T_t\int_{\Omega}\partial_tu_m(\tau)f(u_n)\mathrm{d}x\mathrm{d}\tau\\
      =&\int_{\Omega}F(u_n(T))-\int_{\Omega}F(u_n(t))+\int_{\Omega}F(u_m(T))-\int_{\Omega}F(u_m(t))\\ &-\int^T_t\int_{\Omega}\partial_tu_n(\tau)f(u_m)\mathrm{d}x\mathrm{d}\tau-\int^T_t\int_{\Omega}\partial_tu_m(\tau)f(u_n)\mathrm{d}x\mathrm{d}\tau
   \end{aligned}
\end{equation*}
combining with \eqref{u_star_weak_convergence}, \eqref{u_t_star_weak_convergence}, take $m\to\infty$ and $n\to\infty$, we get
\begin{equation}\label{ft1}
   \begin{aligned}
      &\lim_{n\to\infty}\lim_{m\to\infty}\int^T_t\int_{\Omega}(\partial_tu_{n}(\tau)-\partial_tu_m(\tau))(f(u_n)-f(u_m))\mathrm{d}x\mathrm{d}\tau\\
      =&\int_{\Omega}F(u(T))-\int_{\Omega}F(u(t))+\int_{\Omega}F(u(T))-\int_{\Omega}F(u(t))\\
      &-\int^T_t\int_{\Omega}\partial_tu(\tau)f(u)\mathrm{d}x\mathrm{d}\tau-\int^T_t\int_{\Omega}\partial_tu(\tau)f(u)\mathrm{d}x\mathrm{d}\tau\\
      =& 0.
   \end{aligned}
\end{equation}

Third, due to the fact that $\beta(t)$ is a continuous function and apply (4.14) and (4.15) we have
\begin{equation*}
  \lim_{n\to\infty}\lim_{m\to\infty}\int^T_t\beta(\tau)\big(\partial_tu_n-\partial_tu_m,u_n-u_m\big)\mathrm{d}\tau =0.
\end{equation*}

To this end, by virtue of equality \eqref{ft1}, through taking $n\to\infty$ and $m\to\infty$ we can obtain
\begin{equation*}
  \lim_{n\to\infty}\lim_{m\to\infty}\int_t^T\int_s^{\tau}\gamma(\tau)(f(u_n)-f(u_m),\partial_tw(\sigma))e^{-2(\tau-\sigma)}\mathrm{d}\sigma\mathrm{d}\tau = 0,
\end{equation*}
and
\begin{equation*}
  \lim_{n\to\infty}\lim_{m\to\infty} \int_s^t(f(u_1)-f(u_2),\partial_tw(\tau))e^{-2(t-\tau)}\mathrm{d}\tau=0.
\end{equation*}

Therefore, we verified that $\psi_{T,s}(\cdot,\cdot)$ is a contractive function.
\end{proof}

Now we can complete the proof of our main result Theorem $\ref{mr}$.
\begin{proof} We can see from Lemma \ref{pa} Lemma \ref{ac} the conditions for Theorem \ref{eth} are all satisfied respectively. Hence there exists a pullback attractor for the process $S(t,s)$ generated by the problem \eqref{mp}.
\end{proof}

\begin{remark}The argument will be no difference if we consider the nonlinear term $f(u)$ as $f(u,t)$. Of course, there will need certain regular assumptions for $f(u,t)$ with respect to $t$.
\end{remark}

\begin{remark} In particular, let the assumptions \eqref{fcon}, \eqref{scon}, \eqref{bt0} and \eqref{bt1} hold, one can obtain the existence and finite fractal dimension of the global attractor for the semigroup generated by the problem \eqref{mp} with $-\mathcal{L}=-\triangle$.
\end{remark}

\begin{remark}Actually, one can consider more general assumptions for $\beta(t)$ than our case. For instance, we may propose a reasonable hypothesis that the positive part $\alpha(t)$ of $\beta(t)$ can be split into two parts as $\alpha(t) = \alpha_1(t) + \alpha_2(t)$ such that
\begin{equation*}
  \int_0^t\big(\alpha_2(s)-\gamma(s)\big)\mathrm{d}s \to 0 \quad\text{as}\quad t\to\infty,
\end{equation*}
where $\gamma(t)$ is the negative part of $\beta(t)$.
And we may conjecture that the solutions of \eqref{mp} exist globally since the decreasing factor $\alpha_2(t)$ cancels the increasing influence of $\gamma(t)$ out and the left part $\alpha_1(t)$ guarantee energy decays in infinity.
\end{remark}

\textbf{Acknowledgments.} During the preparation of this paper, we benefited of discussions with many people, in particular we wish to thank Prof. Wenxian Shen for many discussions and comments.
We were supported by the National Nature Science Foundation of China grants 11471148 and 11522109, and also by Fundamental Research Funds for the Central Universities No. lzujbky-2017-it52 and No. lzujbky-2017-160.


\begin{thebibliography}{99}
\pagestyle{plain} \linespread{0.3} {\small


\bibitem{AF}
   R. Adams.
   {Sobolev Spaces}.
   \textit{Academic Press, New York}, 1975.
\bibitem{BV}
   A. Babin and M. Vishik.
   {Attractor of evolution equations}.
   \textit{North-Holland, Amsterdam}, 1992.





\bibitem{jm}
J. Ball.
Global attractors for damped semilinear wave equations.
\textit{Discrete Contin. Dyn. Syst}, 10 (2004), 31-52.


\bibitem{AIN}
  A. Carvalho, J. Langa and J. Robinson.
  {Attractors for infinite-dimensional non-autonomous dynamical systems}.
  \textit{Springer}, 2013.


\bibitem{IC}
   I. Chueshov.
   {Dynamics of quasi-stable dissipative systems}.
   \textit{New York, Springer}, 2015.





\bibitem{II}
I. Chueshov and I. Lasiecka.
Long-time dynamics of von Karman semi-flow with non-linear boundary/interior damping.
\textit{J. Differential Equations}, 233 (2007), 42-86.

\bibitem{II1}
I. Chueshov and I. Lasiecka.
Long-time behavior of second order evolution equations with nonlinear damping.
\textit{Mem. Amer. Math. Soc.}, 195.912 (2008), Providence.

\bibitem{DK}
  J. Cholewa and T. Dlotko.
   Global attractors in abstract parabolic problems.
   \textit{Cambridge University Press}, 2000.



\bibitem{bto}
T. Caraballo, J. Langa, F. Rivero and A. Carvalho.
A gradient-like nonautonomous evolution process.
\textit{International Journal of Bifurcation and Chaos}, 20.09 (2010), 2751-2760.

\bibitem{betate}
T. Caraballo, A. Carvalho, J. Langa and F. Rivero.
A non-autonomous strongly damped wave equation: Existence and
continuity of the pullback attractor.
\textit{Nonlinear Analysis}, 74 (2011) 2272-2283.

\bibitem{cV}
V. Chepyzhov and M. Vishik.
 Attractors for equations of mathematical physics.
\textit{Amer. Math. Soc.}, (2002).

\bibitem{Folland1975}
B. Folland.
Subelliptic estimates and function spaces on nilpotent Lie groups,
\textit{Arkiv f$\ddot{o}$r matematik}, 1975, 13(1): 161-207.


 \bibitem{sz1}
  B. Franchi and E. Lanconelli.
  Une m\'{e}trique associ\'{e}e \`{a}  une class d'op\'{e}rateurs elliptiques d\'{e}g\'{e}n\'{e}r\'{e}s, Conference on linear partial and pseudodifferential operators (Torino, 1982), \textit{Rend. Sem. Mat. Univ. Politec. Torino 1983}, Special Issue (1984), 105-114.

\bibitem{BEE}
  B. Franchi and E. Lanconelli.
  H\"{o}lder regularity theorem for a class of linear nonuniformly elliptic operators with measurable coefficients.
  \textit{Ann. Sc. Norm. Super. Pisa Cl. Sci.}, 10 (1983), 523-541.


  \bibitem{BE}
  B. Franchi and E. Lanconelli.
  An embedding theory for Sobolev spaces related to non-smooth vector fields and Harnack inequality.
  \textit{Comm. Partial Differential Equations}, 9 (1984), 1237-1264.


\bibitem{CE}
  C. Guti\'{e}rrez and E. Lanconelli.
  Maximum principle, nonhomogeous Harnack inequality, and Liouville theorems for X-elliptic operators.
  \textit{Comm. Partial Differential Equations}, 28 (2003), 1833-1862.



\bibitem{Garofalo1996}
 N. Garofalo, D. Nhieu.
 Isoperimetric and Sobolev inequalities for Carnot-Carath$\acute{e}$odory spaces and the existence of minimal surfaces.
\textit{Communications on Pure and Applied Mathematics}, 1996, 49(10): 1081-1144.

\bibitem{jk}
J. Hale.
Asymptotic behavior of dissipative system.
\textit{Amer. Math. Soc.}, (1988), Providence.

\bibitem{MPM}
H. Jeffreys and B. Jeffreys.
Methods of mathematical physics.
\textit{Cambridge university press}, 1999.

\bibitem{POD}
R. Joly and J. Royer.
Energy decay and diffusion phenomenon for the asymptotically periodic damped wave equation.
\textit{arXiv preprint}, arXiv:1703.05112, 2017.

\bibitem{HAr}
 A. Kogoj and S. Sonner.
Hardy type inequalities for $\triangle_{\lambda}$-Laplacians.
\textit{Complex Variables and Elliptic Equations}, 61.3 (2016), 422-442.


\bibitem{AMX}
  A. Kogoj and S. Sonner.
  Attractors met X-elliptic operators.
 \textit{J. Math. Appl. Anal.}, 420 (2014), 407-434.




\bibitem{AP}
 A. Kogoj and S. Sonner.
 Attractor for a class of semi-linear degenerate parablic equations.
 \textit{J. Evol. Equ.}, 13 (2013), 675-691.

\bibitem{LT}
  A. Kogoj and E. Lanconelli.
  Liouville theorem for X-elliptic operators.
  \textit{Nonlinear Anal.}, 70 (2009), 2974-2985.

\bibitem{LS}
   D. Li, C. Sun and Q. Chang.
    Global attractor for degenerate damped hyperbolic equations.
    \textit{J. Math. Anal. Appl.}, 453(2017), 1-19.

  \bibitem{00}
  E. Lanconelli and A. Kogoj.
  X-elliptic operators and X-control distances.
  \textit{Ricerche di Matematica}, 49 (2000), 223-224.

\bibitem{fin}
E. Lanconelli and A. Kogoj.
X-elliptic operators and X-control distances, Contributions in honor of the memory of Ennio De Giorgi.
\textit{Ric. Mat.}, 49 (2000), 223-243.

\bibitem{MF}
T. Ma, P. Mar\'{I}n-Rubio and C. Chu\~{n}o,
Dynamics of wave equations with moving boundary.
\textit{J. Differential Equations}, 262.5 (2017), 3317-3342.








\bibitem{CF}
C. Sun, D. Cao and J. Duan.
Non-autonomous dynamics of wave equations with nonlinear damping and critical nonlinearity.
\textit{Nonlinearity}, 19.11 (2006), 2645.


































}
\end{thebibliography}
\end{document}